\documentclass[11pt,oneside, reqno]{amsart}
\usepackage[top=25mm, bottom=25mm, left=25mm, right=25mm]{geometry}
\usepackage{amsfonts,amssymb,amsmath,amsthm,amsxtra}
\usepackage[T1]{fontenc}
\usepackage[utf8]{inputenc}
\usepackage{mathtools}
\usepackage{dsfont}
\usepackage{enumitem}

\numberwithin{equation}{section}
\newtheorem{theorem}[equation]{Theorem}

\newtheorem{lemma}[equation]{Lemma}
\newtheorem{proposition}[equation]{Proposition}

\newtheorem*{ber}{Furstenberg--Bergelson--Leibman conjecture}

\theoremstyle{definition}

\newtheorem{remark}[equation]{Remark}

\newcommand{\ind}[1]{\mathds{1}_{{#1}}}

\begin{document}

\title[The circle method  and pointwise ergodic theorems]{The circle method  and pointwise ergodic theorems}

\author{Mariusz Mirek }
\address[Mariusz Mirek]{
Department of Mathematics,
Rutgers University,
Piscataway, NJ 08854-8019, USA 
\&
Instytut Matematyczny,
Uniwersytet Wroc{\l}awski,
Plac Grunwaldzki 2/4,
50-384 Wroc{\l}aw,
Poland}
\email{mariusz.mirek@rutgers.edu}

\thanks{The author was partially supported  by the NSF CAREER grant (DMS-2236493).}

\begin{abstract} The purpose of this article is to discuss the circle
method and its quantitative role in understanding pointwise almost
everywhere convergence phenomena for polynomial ergodic averaging
operators. Specifically, we will use the circle method to illustrate
that pointwise almost everywhere convergence and norm convergence in
ergodic theory can have fundamentally different natures. More
importantly, these differences may necessitate the use of distinct
types of tools, which can sometimes be more intriguing than the
original problems themselves.
\end{abstract}

\date{\today}

\maketitle

\section{Introduction}

The circle method is a highly successful and powerful technique in
analytic number theory. It was developed by Hardy and Ramanujan
\cite{HR} to obtain the asymptotic formula for the partition function,
which counts the number of unordered representations of a positive
integer $N\in\mathbb Z_+:=\{1, 2, 3,\ldots\}$ as the sum of any number
of positive integers. Subsequently, Hardy and Littlewood \cite{HL1,
HL2} extended these ideas to derive the asymptotic formula in the
context of the Waring problem, which concerns the number of
representations of a positive integer as a sum of $k$-th powers for
$k \geq 2$.

The method was further simplified and strengthen in various ways by
Vinogradov \cite{V}, Kloosterman \cite{Klos} among others.  There is a
vast literature on applications of the circle method to problems in
additive number theory. The books of Iwaniec and Kowalski \cite{IK},
Nathanson \cite{Nat}, Vaughan \cite{Vau}, Vinogradov \cite{Vin}, and
notes of Wooley \cite{Wooleynotes} are excellent references about this
exciting subject.

These days, the circle method has many applications that go far beyond
problems in analytic number theory. Spectacular applications of the
circle method in ergodic theory were discovered by Bourgain. In the
mid of 1980's, Bourgain, in a series of papers \cite{B1, B2, B3},
established various pointwise ergodic theorems with orbits along the
so-called arithmetic sets. The key tool was the circle method. This
line of research was taken up by Stein and his collaborators
\cite{IMSW, MSW, MSW1, SW2}, who developed Bourgain's ideas to study
$L^p$-boundedness of discrete analogues in harmonic analysis.

The purpose of this article is to survey the classical circle method
and its modern perspective that have recently been developed and
demonstrated to be effective in establishing various pointwise ergodic
theorems \cite{BMSWer, Kr, KMT, MSS, MST1, MST2, MSW-survey, MSZ3,
MT}, as well as in understanding discrete operators of Radon type
\cite{IMW, IW, Mes, M10, Pierce0, Slomian, Trojan}. We begin by
examining the context of classical pointwise almost everywhere
convergence problems and norm convergence problems in ergodic
theory. This discussion will elucidate the intricate nature of
pointwise convergence problems and further highlight the necessity of
employing quantitative tools from harmonic analysis, number theory,
and additive combinatorics, all of which are essential for
understanding this mode of convergence, and all of which are interesting in their own right.

\section{Classical pointwise ergodic theorems}
In the early 1930s, von Neumann \cite{vN} established a mean ergodic
theorem, while Birkhoff \cite{BI} proved an almost everywhere
pointwise ergodic theorem (often referred to in the literature as the
individual ergodic theorem). We summarize both ergodic theorems in the
following result.

\begin{theorem}[Birkhoff's and von Neumann's
ergodic theorems]
\label{birk}
Let $(X,\mathcal B(X), \mu)$ be a $\sigma$-finite measure space equipped
with a measure-preserving transformation $T:X\to X$. Then for every
$p\in(1, \infty)$ and every $f\in L^p(X)$ the averages
\begin{align}
\label{eq:63}
A_{N;X, T}^{\mathrm n}f(x):=\mathbb E_{n\in [N]}f(T^{n}x), \qquad x\in X, \qquad N\in\mathbb Z_+
\end{align}
converge almost everywhere on $X$ and in $L^p(X)$ norm as
$N\to\infty$.
\end{theorem}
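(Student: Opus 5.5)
The plan follows the classical ``maximal inequality plus dense class'' scheme. For norm convergence we first work in $L^2(X)$. Let $U f:=f\circ T$, an isometry on $L^2(X)$, and split
\[
L^2(X)=\ker(I-U)\oplus\overline{\{g-Ug: g\in L^2(X)\}}.
\]
This orthogonal decomposition holds because $\ker(I-U^*)=\ker(I-U)$ for an isometry: if $U^*f=f$, then $\|Uf-f\|_2^2=2\|f\|_2^2-2\mathrm{Re}\langle Uf,f\rangle=2\|f\|_2^2-2\mathrm{Re}\langle f,U^*f\rangle=0$. On invariant functions $A_{N;X,T}^{\mathrm n}f=f$. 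On coboundaries $g-Ug$ the sum telescopes:
\[
A_{N;X,T}^{\mathrm n}(g-Ug)=\frac{g-U^Ng}{N},
\]
which tends to $0$ in norm. Uniform boundedness $\|A_{N;X,T}^{\mathrm n}\|_{L^2\to L^2}\le 1$ then extends convergence to the closure, giving von Neumann's theorem on $L^2$. Transferring to $L^p$ uses $\|A_N\|_{L^p\to L^p}\le1$ together with density of $L^2\cap L^p$ in $L^p$, and two cases. For $p\ge2$ and $f$ bounded in $L^1\cap L^\infty$, interpolation between the $L^2$ convergence and uniform $L^\infty$ bounds suffices. For $1<p<2$, the a.e.\ convergence below combined with the dominated convergence theorem, with the maximal function as dominating function, gives norm convergence.

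For pointwise convergence, I would prove the maximal inequality
\[
\Big\|\sup_{N\in\mathbb Z_+}|A_{N;X,T}^{\mathrm n}f|\Big\|_{L^p(X)}\lesssim_p\|f\|_{L^p(X)}, \qquad 1<p<\infty.
\]
This goes by Calderón's transference principle. For fixed $x$, apply the Hardy--Littlewood maximal inequality on $\mathbb Z$ to the sequence $n\mapsto f(T^nx)\ind{[0,M+J]}(n)$, restrict $N\le J$ and average over the orbit segment $[M]$. Using measure preservation, this gives $\|\sup_{N\le J}|A_Nf|\|_p^p\lesssim\frac{M+J}{M}\|f\|_p^p$. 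Letting $M\to\infty$ and then $J\to\infty$ removes the truncation. No finiteness of $\mu$ is needed, so the $\sigma$-finite case is covered.

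Next comes pointwise convergence on a dense class. For $f=h+(g-Ug)$ with $Uh=h$ and $g\in L^2\cap L^\infty$, we have $A_Nf=h+(g-U^Ng)/N\to h$ everywhere, since $|g-U^Ng|\le2\|g\|_\infty$. Such $f$ are dense in $L^2$, using that bounded $g\in L^2$ are dense and the closure of the coboundaries is spanned by these. Then the standard Banach principle applies: the oscillation
\[
\Omega f:=\limsup_N A_Nf-\liminf_N A_Nf
\]
satisfies $\Omega f\le 2\sup_N|A_N(f-f_0)|$ for $f_0$ in the dense class. The maximal inequality gives $\|\Omega f\|_2\lesssim\|f-f_0\|_2$, which can be made arbitrarily small, so $\Omega f=0$ a.e. This yields a.e.\ convergence on $L^2$. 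Density of $L^2\cap L^p$ in $L^p$ and the $L^p$ maximal inequality extend it to all $p\in(1,\infty)$.

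The main obstacle I anticipate is the maximal inequality, specifically making the transference rigorous in the $\sigma$-finite setting with the truncation limits handled in the correct order. The dense-class step is essentially algebraic once the splitting of $L^2(X)$ is in place.
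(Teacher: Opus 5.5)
Your proposal is correct and follows essentially the same route as the paper: the Riesz decomposition $L^2(X)={\rm I}_T\oplus\overline{{\rm J}_T}$ with telescoping on coboundaries for norm convergence, and for pointwise convergence the two-step scheme (Hardy--Littlewood maximal inequality on $\mathbb Z$ transferred via Calder{\'o}n, then a.e.\ convergence on the dense class ${\rm I}_T\oplus{\rm J}_T$ and the Banach principle, extended to $L^p$ by density of $L^p\cap L^2$); your explicit transference computation and your $L^p$ norm-convergence argument for $p\neq 2$ supply details the paper leaves implicit. One cosmetic slip: with $[N]=\{1,\ldots,N\}$ the telescoped average is $N^{-1}(Ug-U^{N+1}g)$ rather than $N^{-1}(g-U^{N}g)$, which changes nothing.
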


Here and throughout the paper we will use the following useful
notation $[N]:=(0, N]\cap\mathbb Z$ for any real number $N\ge1$, and
$\mathbb E_{y\in Y}f(y):=\frac{1}{\#Y}\sum_{y\in Y}f(y)$ for any finite set
$Y\neq\emptyset$ and any $f:Y\to\mathbb C$.

\medskip

Theorem \ref{birk}  serves as  an illustrative  example of  the subtle
distinctions     between     norm    convergence     and     pointwise
convergence.  Specifically,  the   conclusion  of  Theorem  \ref{birk}
remains valid when $p = 1$  and $(X, \mathcal{B}(X), \mu)$ is a finite
measure  space.  In fact,  from  the  perspective of  applications  of
ergodic  theorems  --- where  the  statistical  properties of  ergodic
averages in  ergodic measure-preserving systems are  essential --- the
general setting of $\sigma$-finite  measure spaces is not particularly
interesting (since the limits are zero), therefore only finite measure
spaces and  ergodic averages on $L^\infty(X)$  functions are relevant.
Nevertheless,  the $\sigma$-finite  measure spaces  are important  for
pointwise  almost  everywhere  convergence  due  to  the  Calder{\'o}n
transference principle \cite{Cald}, see below.

However, if $(X, \mathcal{B}(X), \mu)$ is a $\sigma$-finite measure
space with $\mu(X) = \infty$, then only pointwise convergence is
guaranteed for $p = 1$ in \eqref{eq:63}, while norm convergence
fails. This can be seen by considering the integer shift system
$(\mathbb Z, \mathcal B(\mathbb Z), \mu_{\mathbb Z})$ endowed with the shift
transformation $S:\mathbb Z\to\mathbb Z$, where $\mathcal B(\mathbb Z)$ is the
$\sigma$-algebra of all subsets of $\mathbb Z$, $\mu_{\mathbb Z}$ is counting
measure on $\mathbb Z$, and $S(x):=x-1$ for  $x\in\mathbb Z$. In this case we
shall abbreviate $A_{N;\mathbb Z, S}^{\mathrm n}f$ to
\begin{align}
\label{eq:59}
A_{N;\mathbb Z}^{\mathrm n}f(x)=\mathbb E_{n\in [N]}f(x-n), \qquad x\in \mathbb Z, \qquad N\in\mathbb Z_+.
\end{align}
Taking $f=\ind{\{0\}}$ in \eqref{eq:59}, we have
$\lim_{N\to \infty}A_{N;\mathbb Z}^{\mathrm n}f(x)=0$, while
$\|A_{N;\mathbb Z}^{\mathrm n}f\|_{\ell^1(\mathbb Z)}=1$ for every $N\in\mathbb Z_+$.
This shows that the norm convergence fails for $p=1$ in the case of a
$\sigma$-finite measure space setting.  \medskip

\subsection{Norm convergence in Theorem \ref{birk}: von Neumann's mean
ergodic theorem} There are many proofs of Theorem \ref{birk} in the
literature; we refer, for instance, to the monographs \cite{EW, Peter}
for more details and historical background. However, here we present
an approach based on Riesz decomposition \cite{Riesz}, which provides
a relatively quick and elementary proof of von Neumann's mean ergodic
theorem and will also be useful in establishing pointwise convergence
in Theorem \ref{birk}.

\medskip

Riesz decomposition \cite{Riesz} asserts that the Hilbert space
$L^2(X)$ can be decomposed into a one-dimensional closed subspace of
$T$-invariant functions and its orthogonal complement. Specifically,
we have ${\rm I}_T\oplus \overline{{\rm J}_T}= L^2(X)$, where
$\overline{{\rm J}_T}$ is the $L^2(X)$ closure of ${\rm J}_T$, and
\begin{align}
\label{eq:61}
 {\rm I}_T:=\{f\in L^2(X): f\circ T=f\},
\qquad \text{ and } \qquad
{\rm J}_T:=\{g-g\circ T: g\in L^2(X)\cap L^{\infty}(X)\}.
\end{align}
We then note that $A_{N;X, T}^{\mathrm n}f=f$ for $f\in {\rm I}_T$,
and $\lim_{N\to\infty}\|A_{N;X, T}^{\mathrm n}h\|_{L^2(X)}=0$ for $h\in {\rm J}_T$,
since
\begin{align}
\label{eq:18}
A_{N;X, T}^{\mathrm n}h=N^{-1}\big(g\circ T-g\circ T^{N+1}\big)
\end{align}
telescopes and converges to $0$ in the $L^2(X)$ norm, whenever $h=g-g\circ T\in {\rm J}_T$. Thus,
$A_{N;X, T}^{\mathrm n}f$ converges in the $L^2(X)$ norm to the
orthogonal projection of $f$ onto ${\rm I}_T$, which establishes 
von Neumann's mean ergodic theorem. 

\medskip

\subsection{Pointwise convergence in Theorem \ref{birk}: Birkhoff's
individual ergodic theorem} We now turn our attention to outline a
classical strategy for handling pointwise convergence problems,  which
is based on a two-step procedure:
\begin{itemize}
\item[(i)] The first step establishes $L^p(X)$ boundedness (when
$p\in(1, \infty)$), or a weak type $(1, 1)$ bound (when $p=1$) of the
corresponding maximal function
$\sup_{N\in\mathbb Z_+}|A_{N;X, T}^{\mathrm n}f(x)|$. More precisely, for  $p\in[1, \infty]$ we are seeking a constant $C_p\in\mathbb R_+$ such that for every $f\in L^p(X)$ the following $L^p(X)$ boundedness holds
\begin{align}
\label{eq:56}
\|\sup_{N\in\mathbb Z_+}|A_{N;X, T}^{\mathrm n}f|\|_{L^p(X)}\le C_p\|f\|_{L^p(X)},
\quad \text{ if } \quad p\in(1, \infty],
\end{align}
or the weak type $(1, 1)$ bound holds
\begin{align}
\label{eq:57}
\sup_{\lambda>0}\lambda\mu\big(\{x\in X: \sup_{N\in\mathbb Z_+}|A_{N;X, T}^{\mathrm n}(x)|>\lambda\}\big)\le C_1\|f\|_{L^1(X)},
\quad \text{ if } \quad p=1.
\end{align}
Inequalities \eqref{eq:56} and \eqref{eq:57} for the integer shift system with $\sup_{N\in\mathbb Z_+}|A_{N;\mathbb Z}^{\mathrm n}f|$
in place of $\sup_{N\in\mathbb Z_+}|A_{N;X, T}^{\mathrm n}f|$ follow from the Hardy--Littlewood
maximal theorem, see for instance \cite{bigs}. Then by the Calder{\'o}n transference principle \cite{Cald}, they can be
transferred from the integer shift system to arbitrary measure-preserving system giving \eqref{eq:56} and \eqref{eq:57}.
 Having  these maximal estimate in hands one can 
easily prove that the set
\begin{align*}
\qquad \qquad \mathbf P\mathbf C[L^{p}(X)]=\{f\in L^{p}(X): \lim_{N\to\infty}A_{N;X, T}^{\mathrm n}f \text{
exists $\mu$-almost everywhere on $X$}\}
\end{align*}
is closed in $L^{p}(X)$ for any $p\in[1, \infty)$.

\item[(ii)] In the second step one shows that
$\mathbf P\mathbf C[L^{p}(X)]= L^{p}(X)$. In view of the first step
one has to find a dense class of functions in
$L^{p}(X)$ for which we have pointwise convergence. Assuming that $p=2$,
and invoking Riesz decomposition \cite{Riesz}, a good
candidate is the space ${\rm I}_T\oplus {\rm J}_T\subseteq L^2(X)$ with ${\rm I}_T, {\rm J}_T$ as in \eqref{eq:61}.
We then note that $A_{N;X, T}^{\mathrm n}f=f$ for  $f\in {\rm I}_T$, and $\lim_{N\to\infty}A_{N;X, T}^{\mathrm n}h=0$ almost everywhere on $X$ for $h\in {\rm J}_T$, by \eqref{eq:18}. This establishes pointwise almost
everywhere convergence of $A_{N;X, T}^{\mathrm n}$ on ${\rm I}_T\oplus {\rm J}_T$, which is dense
in $L^2(X)$. These two steps guarantee that $\mathbf P\mathbf C[L^{2}(X)]= L^{2}(X)$. Consequently,
$A_{N;X, T}^{\mathrm n}$ converges pointwise on $L^{p}(X)\cap L^{2}(X)$ for any $p\in[1, \infty)$. Since 
$L^{p}(X)\cap L^{2}(X)$ is dense in $L^{p}(X)$  we also conclude, in view of the first step, that
$\mathbf P\mathbf C[L^{p}(X)]= L^{p}(X)$, and a brief outline of the proof of Theorem \ref{birk} is completed.
\end{itemize}

This two-step procedure originates from the Banach continuity
principle \cite{Banach} and illustrates a general rule for addressing
pointwise convergence problems. It is particularly effective for
questions of pointwise convergence that arise in harmonic analysis, as
there are many natural dense subspaces within Euclidean spaces (like
Schwartz or even smooth bump functions) that can be utilized to verify
step 2. However, in ergodic theory, the scenario can differ
significantly when working with abstract measure spaces, as
demonstrated by Bourgain \cite{B1, B2, B3}. We will explore more
examples below.

There is a vast literature \cite[Section X, p. 454]{bigs} and
\cite[Section 1]{Garsia}, extending the Banach continuity principle
\cite{Banach}, and demonstrating that pointwise convergence for many
operators is essentially equivalent to the boundedness of their
corresponding maximal functions. Therefore, the maximal inequalities
from \eqref{eq:56} and \eqref{eq:57} can be regarded as a quantitative
form of pointwise convergence and highlight the distinction between
norm and pointwise convergence.

In the first step of the two-step procedure, a very important tool is
the Calder{\'o}n transference principle \cite{Cald}, which reduces the
maximal inequalities \eqref{eq:56} and \eqref{eq:57} from abstract
measure-preserving systems $(X, \mathcal{B}(X), \mu)$ to the integer
shift system. We also refer to \cite{Kosz}. This significant reduction allows us to utilize the
algebraic structure of $\mathbb Z$, where ergodic averages become
convolution operators. For instance, the average
$A_{N;\mathbb Z}^{\mathrm n}f(x)=\mathbb E_{n\in [N]}f(x-n)$ from \eqref{eq:59}
represents a convolution of $f$ with the kernel
$K_N=N^{-1}\sum_{n\in[N]}\ind{\{n\}}$.  This interpretation, in turn,
enables us to employ Fourier methods or almost orthogonality
techniques on $\mathbb{Z}$, which are generally not available in
abstract measure-preserving systems. Therefore, the necessity of
working with the set of integers equipped with a counting measure,
which is a $\sigma$-finite measure space, is the primary reason
(though a very important one) why we formulate our theorems in this
paper for $\sigma$-finite measure spaces.

We also have to emphasize that the Calder{\'o}n transference principle
\cite{Cald, Kosz} only transfers quantitative bounds (like maximal
inequalities \eqref{eq:56} and \eqref{eq:57}) that imply pointwise
almost everywhere convergence, but does not transfer pointwise almost everywhere convergence
itself. In fact, in the integer shift system, pointwise convergence is
implied by norm convergence, since the $\ell^\infty(\mathbb Z)$ norm is
dominated by the $\ell^2(\mathbb Z)$ norm. Hence, from now on we will only be
concerned with discussing and proving quantitative bounds.

Finally, we would like to point out another subtle difference between
norm convergence and pointwise almost everywhere convergence, using
Theorem \ref{birk}. Namely, von Neumann in \cite{vN} proved mean
ergodic theorem for the so-called uniform averages, which are given by \eqref{eq:63} with
shifted averages
$\mathbb E_{n\in[N]\setminus[M]}:=\frac{1}{N-M}\sum_{n=M+1}^N$ in
place of the ordinary averages
$\mathbb E_{n\in[N]}:=\frac{1}{N}\sum_{n=1}^N$. These two averages
coincide for $M=0$.  The asymptotic behavior of uniform averages in
the $L^2(X)$ norm is pivotal in combinatorial applications; we refer
to~\cite{Ber1, Ber2} for motivations and an
extensive literature.

By the dominated convergence theorem, the pointwise almost everywhere
convergence of the uniform averages
$\mathbb E_{n\in[N]\setminus[M]}f(T^{n}x )$ as $N - M \to \infty$
implies their norm convergence for all $f\in L^{\infty}(X)$ on a
probability space $(X, \mathcal B(X), \mu)$. Interestingly, pointwise
almost everywhere convergence for uniform averages $\mathbb E_{n\in[N]\setminus[M]}f(T^{n}x )$ can fail, as shown by del Junco and Rosenblatt~\cite{dJR}. This stands in sharp contrast to the behavior of uniform averages in norm, highlighting another subtle difference between norm convergence and pointwise almost everywhere convergence. Therefore, it only makes sense to study pointwise almost everywhere convergence for ordinary averages $\mathbb E_{n\in[N]}:=\frac{1}{N}\sum_{n=1}^N$.

\begin{remark}
\label{rem:9}
This last phenomenon  reminds a bit a situation from Fourier series or the theory of complete orthonormal systems  that any
complete orthonormal system $(\Phi_n)_{n\in\mathbb Z_+}$ on $[0, 1]$ admits a rearrangement
$(\Phi_{\sigma(n)})_{n\in\mathbb Z_+}$ which is such that for some function $f\in L^2([0, 1])$ the partial sums
$\sum_{n\in[N]}\langle f, \Phi_{\sigma(n)}\rangle \Phi_{\sigma(n)}$ diverge almost everywhere, while the norm convergence remains unchanged
\[
\lim_{N\to\infty}\Big\|\sum_{n\in[N]}\langle f, \Phi_{\sigma(n)}\rangle \Phi_{\sigma(n)}-f\Big\|_{L^2([0, 1])}=
\lim_{N\to\infty}\Big\|\sum_{n\in[N]}\langle f, \Phi_{n}\rangle \Phi_{n}-f\Big\|_{L^2([0, 1])}=0.
\]
In particular, we can take the canonical orthonormal system
$(e^{2\pi i \xi n})_{n\in\mathbb Z}$ on $[0, 1]$, for which the
partial sums are almost everywhere convergent for all
$f\in L^2([0, 1])$, see Carleson's paper \cite{Carleson} and spoil it by a
suitable permutation.  We refer to \cite{Ul} for more details on this
subject, as well as to \cite{Garsia}.
\end{remark}

\section{Bourgain's pointwise ergodic theorem}
In 1933, Khintchin \cite{Khin}, using Birkhoff's ergodic theorem
\cite{BI}, generalized the classical equidistribution result of  Weyl \cite{Weyl-10} 
to the following ergodic equidistribution result: for any irrational
$\theta \in {\mathbb R}$, for any Lebesgue measurable set
$E\subseteq [0,1)$, and for almost every $x\in {\mathbb R}$, one has
\begin{align*}
\lim_{N\to\infty} \frac{\# \{ n\in[N]: \{x + n \theta\} \in E \}}{N}  =  |E|,
\end{align*}
where $\{x\}$ denotes the fractional part of $x\in\mathbb R$.

In 1916, Weyl
\cite{Weyl} extended the classical equidistribution theorem to general
polynomial sequences $(\{P(n)\})_{n\in\mathbb Z}$ having at least one
irrational coefficient. Then it was natural to ask whether a
pointwise ergodic extension of Weyl's equidistribution theorem
holds. This question was posed by Bellow \cite{Bel} and Furstenberg
\cite{F} in the early 1980's.  Specifically, they asked whether, for any
polynomial $P \in {\mathbb Z}[{\rm n}]$ with integer coefficients and
for any invertible measure-preserving transformation $T: X \to X$ on a
probability space $(X,\mathcal B(X), \mu)$, the averages
\begin{align}
\label{eq:66}
A_{N;X, T}^{P(\mathrm n)}f(x)=\mathbb E_{n\in [M]}f(T^{P(n)}x), \qquad x\in X, \qquad N\in\mathbb Z_+
\end{align}
converge almost everywhere on $X$ as $N\to\infty$, for any
$f\in L^{\infty}(X)$.

An affirmative answer to this question was given by Bourgain in series
of groundbreaking papers \cite{B1,B2,B3} which we summarize in
the following theorem.
\begin{theorem}[Bourgain's ergodic theorem]
\label{bourgain}
Let $(X,\mathcal B(X), \mu)$ be a $\sigma$-finite measure space
equipped with an invertible measure-preserving transformation
$T:X\to X$, and $P\in \mathbb Z[{\rm m}]$ be a polynomial with integer
coefficients. Then for every $p\in(1, \infty)$ and every $f\in L^p(X)$
the averages $A_{N;X, T}^{P}f$ from \eqref{eq:66} converge almost
everywhere on $X$ and in $L^p(X)$ norm as $N\to\infty$.
\end{theorem}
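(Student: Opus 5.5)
The plan is to follow the two-step scheme of the previous section. The new difficulty is that no convenient dense class exists for polynomial orbits, so step (ii) has to be replaced by a quantitative oscillation or variational estimate. Norm convergence is easier and will be handled separately. By the Calder\'on transference principle it suffices to prove, for the integer shift system and $A_{N;\mathbb Z}^{P}f(x)=\mathbb E_{n\in[N]}f(x-P(n))$, two bounds. The first is the maximal inequality $\|\sup_N|A^P_{N;\mathbb Z}f|\|_{\ell^p}\lesssim_p\|f\|_{\ell^p}$. The second is a uniform bound on the long oscillations (or $r$-variations, $r>2$) along a lacunary sequence $N_j=\lfloor\tau^j\rfloor$, for every $\tau>1$. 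Lacunarity is harmless: for $f\ge0$ the averages at $N\in[N_j,N_{j+1}]$ are comparable up to factors $\tau^{O(1)}$, so a.e.\ convergence along every lacunary sequence, with $\tau\to1$, gives full a.e.\ convergence. Such an oscillation inequality transfers to $X$ and yields convergence directly, without a dense class.

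On $\mathbb Z$ the operator is a Fourier multiplier with symbol $m_N(\xi)=\mathbb E_{n\in[N]}e(P(n)\xi)$. I would use the circle method to approximate it. For each scale $N$, fix a threshold $Q$, possibly growing like $N^{\epsilon}$, and place a smooth cutoff $\eta$ around the major arcs $\xi\approx a/q$ with $q\le Q$. On these arcs one has
\[
m_N(\xi)=\sum_{q\le Q}\sum_{(a,q)=1}G(a/q)\,\Phi_N(\xi-a/q)\,\eta\big(q^{C}(\xi-a/q)\big)+\text{error},
\]
where $G(a/q)=\mathbb E_{r\in\mathbb Z_q}e(P(r)a/q)$ is a complete Gauss sum, $\Phi_N(\zeta)=\int_0^1e(P(Nt)\zeta)\,dt$ is the continuous multiplier, and the error is $O(N^{-\delta})$. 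On the minor arcs, Weyl's inequality gives $|m_N(\xi)|\lesssim N^{-\delta}$ in a small power of $Q$, and Plancherel then bounds the minor arc part in $\ell^2$ with a square summable gain over dyadic scales.

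The main obstacle is the major arc maximal and oscillation estimate, uniformly in the number of arcs; this requires combining two ingredients. The first is the Gauss sum decay $|G(a/q)|\lesssim q^{-\delta}$. The second is Bourgain's multi-frequency lemma, which states that for a $Q$-separated set of frequencies $\Theta$,
\[
\Big\|\sup_N\Big|\mathcal F^{-1}\Big(\sum_{\theta\in\Theta}\Phi_N(\xi-\theta)\eta(\xi-\theta)\hat f\Big)\Big|\Big\|_{\ell^2}\lesssim(\log\#\Theta)^2\|f\|_{\ell^2}.
\]
I would prove it by sampling: with periodic pieces $\mathbb Z_q$, the operator reduces to the continuous maximal (or variational) function for the Euclidean average along the curve $P(Nt)$, and the logarithmic loss comes from the Rademacher--Menshov argument. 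I would group denominators dyadically, $q\sim2^s$. Each group then contributes at most $2^{-\delta s}s^2$, which is summable in $s$. This gives the $\ell^2$ maximal and oscillation bounds.

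For $p\ne2$, I would interpolate. The trivial $\ell^{p}$ bounds on each dyadic piece lose a factor $2^{\epsilon s}$, while the $\ell^2$ bounds gain $2^{-\delta s}$, and interpolating these yields all $p\in(1,\infty)$. Norm convergence in $L^p(X)$ then follows separately. I would first prove it in $L^2(X)$ from the spectral theorem: the spectral measure of $f$ pairs with $m_N$, and $m_N(\xi)\to\ind{\{0\}}(\xi)$ pointwise for every $\xi$ (Weyl's equidistribution for irrational $\xi$, and for rational $\xi=a/q\ne0$ the limit is $G(a/q)\ne1$, which would need an invariant-subspace decomposition via the $T^q$-factors). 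I would then extend to $L^p$ by the maximal inequality and density. Alternatively, for $f\in L^p\cap L^\infty$ on the relevant pieces, dominated convergence follows from a.e.\ convergence plus the maximal function.
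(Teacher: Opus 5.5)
\textbf{The gap: the passage from $\ell^2$ to $\ell^p$ for $p\neq 2$.} Your outline of the $\ell^2$ theory, the Calder\'on transference and the lacunary reduction is sound. What fails is the step where you say the ``trivial'' $\ell^{p}$ bound for a dyadic major-arc piece (denominators $q\sim 2^s$) loses only $2^{\epsilon s}$. No elementary argument gives that.

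\begin{enumerate}[label*={(\arabic*)}]
\item \emph{The major-arc pieces.} A piece at level $s$ carries $\simeq 2^{2s}$ frequencies. The elementary bounds lose a fixed power $2^{Cs}$ with $C\ge1$: this is true of the triangle inequality over fractions (cf.\ \eqref{eq:292}), and also of summing over $q\sim2^s$ after sampling each $\mathbb Z_q$ separately. Your $\ell^2$ gain is only $s^2 2^{-\delta s}$ with $\delta<1/d$. Write $1/p=(1-\theta)/2+\theta/p_0$. Interpolation then gives a summable bound only when $\theta C<(1-\theta)\delta$, that is, only for $p$ in a neighbourhood of $2$ of width $O(1/d)$.
\item \emph{The minor arcs and error terms.} You treat these only in $\ell^2$, and the same obstruction applies. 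Cutting off to $q\le N^{\epsilon}$ costs $N^{2\epsilon}$ in $\ell^{p_0}$, against a gain of $N^{-c\epsilon}$ with $c$ the small Weyl exponent. So again only $p$ near $2$ is reached, whatever $\epsilon$ is.
\item \emph{Which range is actually lost.} For $p\ge 2$ you can repair this: interpolate the full maximal function with the trivial $\ell^\infty$ bound, and use density of $L^2\cap L^p$. For $1<p<2$ there is no endpoint to interpolate with, since weak type $(1,1)$ fails (Buczolich--Mauldin, LaVictoire, as recalled after Theorem~\ref{bourgain}). So the range $1<p<2$ is genuinely unproved by your argument.
\end{enumerate}

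\textbf{The missing ingredient} is the Ionescu--Wainger multifrequency theory. It gives $\ell^p$ bounds for operators localized to major arcs with loss $2^{\rho s}$ for every $\rho>0$. For projections this is Theorem~\ref{thm:IWproj}. Its variational form is \eqref{eq:3}, which is the $\ell^p$ counterpart, for rational frequencies, of Bourgain's multi-frequency lemma. The paper uses these as follows.
\begin{itemize}
\item \emph{Minor arcs.} Theorem~\ref{thm:IWproj} yields Weyl's inequality in $\ell^p$ (Theorem~\ref{Weyllp}), by interpolating \eqref{eq:7} against \eqref{eq:13} with $\rho$ arbitrarily small. This disposes of the minor arcs \eqref{eq:11} for all $p$.
\item \emph{Major arcs.} The operator is factored as in \eqref{eq:30}. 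The Gauss-sum-free variational part is controlled by \eqref{eq:3}, with loss $2^{\rho l}$. The Gauss-sum factor $T_{\mathbb Z}^{\Sigma_l}[G;\eta_{\le -dup_0}]$ is rewritten via Lemma~\ref{lemma:1} as $A^P_{2^u,\mathbb Z}\Pi_{l,\le -dup_0}f$ plus an error. Theorem~\ref{Weyllp} then gives it a gain $2^{-c_pl}$ directly in $\ell^p$, so Gauss-sum decay is never interpolated against crude bounds.
\end{itemize}
Your ``$2^{\epsilon s}$'' is exactly what these theorems supply. It is true, but it is the deep input of the proof, not a triviality.

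\textbf{A minor point on norm convergence.} $m_N(a/q)\to G(a/q)$, which need not vanish and can even equal $1$ (e.g.\ $P(n)=n(n+1)$, $a/q=1/2$). You only need that $m_N(\xi)$ converges for every $\xi$. Dominated convergence against the spectral measure then gives $L^2$ convergence, with no $T^q$-factor decomposition.
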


In contrast to Birkhoff's theorem, if $P\in\mathbb Z[\rm m]$ is a polynomial
of degree at least two, the pointwise convergence in Theorem
\ref{bourgain} at the endpoint for $p=1$ may fail. It was shown by
Buczolich and Mauldin \cite{BM} for $P(m)=m^2$ and by LaVictoire
\cite{LaV1} for $P(m)=m^k$ for any $k\ge2$. This sharply contrasts
with what occurs in the continuous analogues of ergodic averages \cite{bigs} and
illustrates that any intuition we develop in Euclidean harmonic
analysis --- where sums are replaced by integrals --- can fail
dramatically in discrete problems, making this subject particularly
interesting.

\subsection{Norm convergence in Theorem \ref{bourgain}: Furstenberg's
mean ergodic theorem} The norm convergence in Theorem \ref{bourgain}
was established by Furtsneberg \cite{Fur2}. Assuming that $\mu(X)=1$
and $T:X\to X$ is totally ergodic (i.e. $T^n$ is ergodic for every
$n\in\mathbb Z_+$) one can prove that
\begin{align}
\label{eq:22}
\lim_{N\to\infty}\Big\|A_{N;X, T}^{P(\mathrm n)}f-\int_Xfd\mu\Big\|_{L^2(X)}=0.
\end{align}
We now briefly outline \eqref{eq:22} relying on the van der Corput inequality in Hilbert spaces \cite{EW}.

\begin{proposition}[van der Corput's inequality in Hilbert spaces, see \cite{EW}]
Let $(u_n)_{n \in \mathbb Z_+}$ be a bounded sequence in a Hilbert space
$(\mathbb H, \|\cdot\|)$. Define a sequence $(s_n)_{n \in \mathbb Z_+}$ of real numbers by
\[
s_h = \limsup_{N \rightarrow \infty} | \mathbb E_{n\in[N]}\langle u_{n+h}, u_n \rangle |.
\]
If $\lim_{H \rightarrow \infty} \mathbb E_{h\in[H]}s_h = 0$,
then $\lim_{N \rightarrow \infty} \|\mathbb E_{n\in[N]} u_n \| = 0$. 
\end{proposition}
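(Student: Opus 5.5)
The plan is the classical finite van der Corput lemma followed by two limits, first $N\to\infty$ and then $H\to\infty$. Put $B:=\sup_n\|u_n\|<\infty$.

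\emph{Step 1: a shifted-average identity.} Fix $H\in\mathbb Z_+$. For $N$ much larger than $H$, compare $\mathbb E_{n\in[N]}u_n$ with the double average $\mathbb E_{n\in[N]}\mathbb E_{h\in[H]}u_{n+h}$. For each fixed $h$, shifting the index changes the sum $\sum_{n\in[N]}u_n$ only by at most $2h$ boundary terms. Hence
\[
\Big\|\mathbb E_{n\in[N]} u_n-\mathbb E_{n\in[N]}\mathbb E_{h\in[H]}u_{n+h}\Big\|\le \frac{2HB}{N}.
\]

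\emph{Step 2: Cauchy--Schwarz.} Apply the triangle inequality, then Cauchy--Schwarz in the average over $n$, to get
\[
\Big\|\mathbb E_{n\in[N]}\mathbb E_{h\in[H]}u_{n+h}\Big\|^2\le \mathbb E_{n\in[N]}\Big\|\mathbb E_{h\in[H]}u_{n+h}\Big\|^2
=\mathbb E_{h,h'\in[H]}\mathbb E_{n\in[N]}\langle u_{n+h},u_{n+h'}\rangle .
\]
For a pair with $h\ge h'$, substitute $m=n+h'$. Up to boundary terms of size $O(HB^2/N)$, the inner average becomes $\mathbb E_{m\in[N]}\langle u_{m+(h-h')},u_m\rangle$. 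The case $h<h'$ is handled by conjugate symmetry of the inner product.

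\emph{Step 3: take $N\to\infty$ with $H$ fixed.} Using the definition of $s_k$ as a $\limsup$, together with $s_0\le B^2$, this gives
\[
\limsup_{N\to\infty}\Big\|\mathbb E_{n\in[N]}u_n\Big\|^2\le \mathbb E_{h,h'\in[H]} s_{|h-h'|}\le \frac{B^2}{H}+\frac{2}{H^2}\sum_{k=1}^{H-1}(H-k)s_k .
\]
The last bound counts the pairs $(h,h')$ with $|h-h'|=k$; there are $2(H-k)$ of them for $k\ge1$ and $H$ of them for $k=0$.

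\emph{Step 4: take $H\to\infty$.} Since $H-k\le H$, the right-hand side is at most $B^2/H+2\,\mathbb E_{k\in[H]}s_k$. By the hypothesis $\mathbb E_{h\in[H]}s_h\to0$, this tends to $0$ as $H\to\infty$. The left-hand side does not depend on $H$, so $\|\mathbb E_{n\in[N]}u_n\|\to0$.

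The main obstacle is the bookkeeping in Step 2. The boundary errors must be $O_H(1/N)$, so they vanish for each fixed $H$ before $H$ is sent to infinity. Negative differences $h-h'$ must be converted into $s_{|h-h'|}$ through $|\langle u,v\rangle|=|\langle v,u\rangle|$. The weights $(H-k)/H^2$ must be dominated by the uniform Cesàro average that appears in the hypothesis.
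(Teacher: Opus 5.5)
Your argument is correct and is essentially the standard proof that the paper relies on by citing Einsiedler--Ward without reproducing it. The shift-averaging with error $O(HB/N)$, the convexity/Cauchy--Schwarz step, the reindexing of each pair with $O(HB^2/N)$ boundary error, the count of $2(H-k)$ pairs with $|h-h'|=k$, and the final bound $B^2/H+2\,\mathbb E_{k\in[H]}s_k$ (which uses $s_k\ge0$) all check out; the statement only defines $s_h$ for $h\in\mathbb Z_+$, so read your $s_0$ as the same $\limsup$ at $h=0$, or just bound the diagonal terms by $B^2$ directly.
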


To prove \eqref{eq:22} we proceed by induction on the degree of
$P$. If $P$ is linear, then the result follows from von Neumann's mean
ergodic theorem and the total ergodicity of $T$. Assume that
${\rm{deg}}(P) \geq 2$ and by linearity of $A_{N;X, T}^{P(\mathrm n)}$
we can assume that $\int_X fd\mu = 0$. We will use the van der Corput
inequality for $\mathbb H=L^2(X)$ to prove that
$\lim_{N \rightarrow \infty} \mathbb E_{n\in[N]} u_n = 0$ in $L^2(X)$ with
$u_n = f \circ T^{P(n)}$.  Fix $h \in \mathbb Z_+$ and note that
$n\mapsto P(n+h) - P(n)$ is a polynomial of degree at most
${\rm{deg}}(P)-1$. Therefore, by the induction hypothesis, we have 
$\lim_{N \rightarrow \infty} \mathbb E_{n\in[N]} f \circ T^{P(n+h) - P(n)} = 0$
in $L^2(X)$ for
every $h \in \mathbb Z_+$. By the Cauchy--Schwarz inequality
\[
\lim_{N\rightarrow \infty} \mathbb E_{n\in[N]} \langle u_{n+h}, u_n \rangle =
\lim_{N\rightarrow \infty} \big\langle \mathbb E_{n\in[N]} f \circ T^{P(n+h) - P(n)}, f \big\rangle = 0.
\]
This establishes the hypothesis of the van der Corput lemma, which
consequently implies \eqref{eq:22}.

Furstenberg, using \eqref{eq:22}, proved the polynomial Poincar{\'e}
recurrence theorem \cite{Fur2} and obtained an ergodic proof of S{\'a}rk{\"o}zy
theorem \cite{Sark1, Sark2}. The proof of \eqref{eq:22} relies solely
on Hilbert space methods, and it is not surprising that the
corresponding pointwise almost everywhere convergence will necessitate
more delicate quantitative information to follow the two-step
procedure.

\medskip

\subsection{Pointwise  convergence in Theorem \ref{bourgain}: Bourgain's
ergodic theorem} Bourgain in \cite{B1,B2,B3}  used the two-step procedure  to prove
Theorem \ref{bourgain}. In the first step, Bourgain showed that for all
$p\in(1, \infty]$, there is $C_{p, P}>0$ such that for every
$f\in L^p(X)$ one has
\begin{align}
\label{eq:33}
\big\|\sup_{N\in\mathbb Z_+}|A_{N; X, T}^{P}f|\big\|_{L^p(X)}
\le C_{p, P}\|f\|_{L^p(X)}.
\end{align}

However, in the second step, identifying a dense class of functions
for which pointwise almost everywhere convergence in Theorem
\ref{bourgain} holds turned out to be a fairly challenging problem.
Although Riesz decomposition ${\rm I}_T\oplus {\rm J}_T$ (as for
$A_{N;X, T}^{{\mathrm n}}$) still makes sense, it is not sufficient if
$\deg P\ge2$.  Even for the squares $P(n)=n^2$ it is not clear whether
$\lim_{N\to\infty}A_{N;X, T}^{{\mathrm n}^2}h=0$ for $h\in {\rm J}_T$,
since $(n+1)^2-n^2=2n+1$ have unbounded gaps, and 
$A_{N;X, T}^{{\mathrm n}^2}h$ does not telescope for $h\in {\rm J}_T$
anymore.

\medskip

To overcome this problem, Bourgain proposed controlling  the so-called oscillation
semi-norms. Let $\mathbb J\subseteq \mathbb N=\{0, 1, 2,\ldots\}$ be so that $\#\mathbb J\ge2$,
let $I=(I_j:j\in\mathbb N\cap [0, J])$ be a strictly increasing sequence of length $J+1$ for some
$J\in\mathbb Z_+$, which takes values in $\mathbb J$, and recall that for any
sequence $(\mathfrak a_{t}: t\in\mathbb J)\subseteq\mathbb C$,  and any exponent
$1 \leq r < \infty$, the $r$-oscillation
seminorm is defined by
\begin{align}
\label{eq:38}
O_{I, J}^r(\mathfrak a_{t}: t \in \mathbb J):=
\Big(\sum_{j=0}^{J-1}\sup_{\substack{I_j\le t<I_{j+1}\\ t\in \mathbb J}}
|\mathfrak a_{t} - \mathfrak a_{I_j}|^r\Big)^{1/r}.
\end{align}

Using \eqref{eq:38} with $r=2$,  Bourgain proved that
for any $\tau>1$, any 
$I=(I_j:{j\in\mathbb N})\subseteq \mathbb D_{\tau}:=\{\lfloor \tau^n\rfloor: n\in\mathbb N\}$
such that $I_{j+1}>2I_j$ for all $j\in\mathbb N$, and any $f\in L^2(X)$ one
has
\begin{align}
\label{eq:34}
\big\|O_{I, J}^2(A_{N; X, T}^{P}f:N\in\mathbb D_\tau)\big\|_{L^2(X)}
\le C_{I, \tau}(J)\|f\|_{L^2(X)}, \qquad J\in\mathbb Z_+,
\end{align}
where $C_{I, \tau}(J)$ is a constant depending on $I$ and $\tau$ and
satisfies
\begin{align}
\label{eq:41}
\lim_{J\to\infty} J^{-1/2}C_{I, \tau}(J)=0.
\end{align}

Bourgain \cite{B1, B2, B3} had the ingenious insight that inequality
\eqref{eq:34} in conjunction with \eqref{eq:41} is sufficient to
establish pointwise convergence of $A_{N; X, T}^{P}f$ for any
$f\in L^2(X)$.  On the one hand, \eqref{eq:34} is quite close to the
maximal inequality; by applying \eqref{eq:33} with $p=2$ we can derive
\eqref{eq:34} with a constant at most $J^{1/2}$. Moreover, in view of
Remark \ref{rem:9}, the supremum in the definition of oscillations
given in \eqref{eq:38} cannot be omitted.  On the other hand, any
improvement (better than $J^{1/2}$) for the constant in \eqref{eq:34}
implies \eqref{eq:41} and, consequently, guarantees pointwise
almost everywhere convergence of $A_{N; X, T}^{P}f$ for any $f\in L^2(X)$. For further
details, including the multi-parameter setting, see
\cite{MSW-survey}. Thus, from this perspective, the combination of
inequalities \eqref{eq:34} and \eqref{eq:41} represents the minimal
quantitative requirement necessary to establish pointwise almost
everywhere convergence.

Bourgain's papers \cite{B1, B2, B3} represented a significant
breakthrough in ergodic theory, providing a variety of new tools ---
ranging from harmonic analysis and number theory to probability and the
theory of Banach spaces --- to study pointwise convergence problems in
a broad sense. In \cite{B3}, a complete proof of Theorem
\ref{bourgain} is presented, and the concepts of $r$-variations and
$\lambda$-jumps are introduced as two additional important
quantitative tools for investigating pointwise almost everywhere
convergence problems. The $r$-variation seminorms and $\lambda$-jumps  are defined as follows. 

\begin{enumerate}[label*={\arabic*}.]
\item For any $\mathbb I\subseteq \mathbb N$, any sequence
$(\mathfrak a_t: t\in\mathbb I)\subseteq \mathbb C$, and any exponent
$1 \leq r < \infty$, the $r$-variation semi-norm is defined to be
\begin{align*}
V^{r}(\mathfrak a_t: t\in\mathbb I):=
\sup_{J\in\mathbb Z_+} \sup_{\substack{t_{0}<\dotsb<t_{J}\\ t_{j}\in\mathbb I}}
\Big(\sum_{j=0}^{J-1}  |\mathfrak a_{t_{j+1}}-\mathfrak a_{t_{j}}|^{r} \Big)^{1/r},
\end{align*}
where the latter supremum is taken over all finite increasing sequences in
$\mathbb I$. 

\item For any $\mathbb I\subseteq \mathbb N$ and any $\lambda>0$, the $\lambda$-jump
counting function of a sequence  $(\mathfrak a_t: t\in\mathbb I)\subseteq \mathbb C$ is defined by
\begin{align*}
N_\lambda(\mathfrak a_t:t\in\mathbb I):=\sup \{ J\in\mathbb N : \exists_{\substack{t_{0}<\ldots<t_{J}\\ t_{j}\in\mathbb I}}
: \min_{0 \le j \le J-1} |\mathfrak a_{t_{j+1}}-\mathfrak a_{t_{j}}| \ge \lambda \}.
\end{align*}
\end{enumerate}

If $V^r(\mathfrak a_n(x): n\in \mathbb Z_+)$ is finite $\mu$-almost everywhere
on $X$, then the underlying sequence is Cauchy and the limit
$\lim_{n\to\infty}\mathfrak a_n(x)$ exists $\mu$-almost everywhere on
$X$. This is a very useful feature of $r$-variations since it allows us
to establish pointwise almost everywhere convergence  in one step.
However, $r$-variational estimates are harder to obtain than maximal ones,
and this is the price that we have to pay. Namely, one has
\begin{align*}
\sup_{n\in \mathbb Z_+}|\mathfrak a_n(x)|\le V^r(\mathfrak a_n(x): n\in \mathbb Z_+)+|\mathfrak a_{n_0}(x)| \quad \text{ for any } \quad 1\le r<\infty, \quad  n_0\in \mathbb Z_+.
\end{align*}

The $r$-variations for a family of bounded martingales
$(\mathfrak f_n:X\to \mathbb C:n\in \mathbb Z_+)$ were studied by L{\'e}pingle
\cite{Lep}, who showed that for all $r\in(2, \infty)$ and
$p\in(1, \infty)$, there is  $C_{p, r}>0$ such that the following inequality,
\begin{align}
\label{eq:15}
\|V^r(\mathfrak f_n: n\in \mathbb Z_+)\|_{L^p(X)}
\le C_{p, r}\sup_{n\in \mathbb Z_+}\|\mathfrak f_n\|_{L^p(X)},
\end{align}
holds with a sharp ranges of exponents, see \cite{JG} for a
counterexample at $r=2$.  In \cite{Lep}, weak type $(1,1)$ variants of
\eqref{eq:15} were proved as well.  We also refer to \cite{PX,B3} and
\cite{MSZ1} for generalizations and different proofs of
\eqref{eq:15}. Inequality \eqref{eq:15} is an extension of Doob's
maximal inequality for martingales and gives a quantitative form of
the martingale convergence theorem.  Bourgain rediscovered inequality
\eqref{eq:15} in his seminal paper \cite{B3}, where it was used to
address the issue of pointwise almost everywhere convergence of ergodic
averages along polynomial orbits.  This initiated a systematic study
of quantitative estimates in harmonic analysis and ergodic theory
which resulted in a vast literature: in ergodic theory \cite{BM, jkrw,
Kr, KMT, LaV1, Mes, MSS, MST2, MSZ3, Trojan}, in discrete harmonic
analysis \cite{IMMS, IMSW, IMW, IW, MSW, MSW1, M10, MSZ1, MT, Pierce0,
Pierce, RW}, and in classical harmonic analysis \cite{ JG, JSW, KLMP,
MST1, OSTTW, MSZ2}.  The latter shows that, in our applications, only
$r > 2$ and $p > 1$ matter, and, in fact, this is the best we can
expect due to the L{\'e}pingle inequality.

\section{A first glimpse at the circle method}
In the last two sections we have examined the differences in
strategies for establishing convergence in norm and pointwise almost
everywhere in the context of classical and Bourgain's ergodic
averages. The key conclusion is that pointwise almost everywhere
convergence necessitates tools that have a more quantitative nature,
whereas norm convergence is primarily qualitative and can be addressed
fairly directly, at least in the linear case.  We will now illustrate
the types of quantitative tools that can be employed to control
maximal functions or $r$-variational semi-norms.
The primary tool for establishing pointwise convergence of polynomial
ergodic averages \eqref{eq:66} is the circle method. These ideas originate
from Bourgain's papers \cite{B1, B2, B3}. Since Bourgain's work, this
method has undergone considerable changes and developments \cite{M10, MSS,  MST1, MST2, MSZ3, MT}, which can
be summarized as follows:

\begin{enumerate}[label*={(\alph*)}]
\item\label{I1} To control the minor arc contribution, we apply
Plancherel's theorem and Weyl's inequality for exponential sums.

\smallskip

\item\label{I2} To control the major arc contribution, we use multifrequency harmonic
  analysis in the spirit of the Ionescu--Wainger multiplier theorem (see
  Theorems 3.3 and 3.27 in \cite{KMPW}).
\end{enumerate}

In the next two sections, we illustrate the ideas from \ref{I1} and \ref{I2} in the context
of $r$-variational estimates for \eqref{eq:66}. Let $P\in\mathbb Z[\rm n]$ be
a polynomial of degree $d\ge2$ and
$\mathbb D_{\tau}:=\{\lfloor \tau^n\rfloor: n\in\mathbb N\}$ be a lacunary
sequence for a fixed $\tau>1$. Specifically, we prove that for every
$p\in(1, \infty)$ and $r\in(2, \infty]$ there exists a constant
$C_{p, r, \tau; P}\in\mathbb R_+$ such that for every $f\in L^p(X)$ we
have
\begin{align}
\label{eq:48}
\|V^r(A_{N;X, T}^{P}f: N\in\mathbb D_{\tau})\|_{L^p(X)}\le C_{p, r, \tau; P}^P\|f\|_{L^p(X)}. 
\end{align}
Once inequality \eqref{eq:48} is established for any $\tau>1$, it
implies Theorem \ref{bourgain}, see \cite[Lemma 1.5]{RW}.

In fact, with more effort, inequality \eqref{eq:48} can also be
established with $\mathbb Z_+$ in place of $\mathbb D_{\tau}$ with the implied
constant remaining independent of $\tau>1$, see \cite{MST2, MSZ3}.  By
the Calder{\'o}n transference principle \cite{Cald}, which transfers
the problem to the integer shift system
$(\mathbb Z, \mathcal B(\mathbb Z), \mu_{\mathbb Z})$ with the integer shift $S:\mathbb Z\to\mathbb Z$
given by $S(x)=x-1$ for $x\in\mathbb Z$, it suffices to show that for every
$f\in \ell^p(\mathbb Z)$ we have
\begin{align}
\label{eq:17}
\|V^r(A_{N;\mathbb Z, S}^{P}f: N\in\mathbb D_{\tau})\|_{\ell^p(\mathbb Z)}\le C_{p, r, \tau; P}^P\|f\|_{\ell^p(\mathbb Z)}. 
\end{align}
From now on, we shall abbreviate $A_{N; \mathbb Z, S}^{P}$ to
\begin{align}
\label{eq:24}
A_{N;\mathbb Z}^{P}f(x)=\mathbb E_{n\in [N]}f(x-P(n)), \quad x\in\mathbb Z, \quad N\in\mathbb Z_+.
\end{align}
The average from \eqref{eq:24} is a convolution operator of the kernel
$K_N^P=N^{-1}\sum_{n\in[N]}\ind{\{P(n)\}}$ with a function $f$. This
is a very important outcome of this reduction. The averages from
\eqref{eq:24} are discrete averaging operators of Radon type, and
their integral counterparts are well understood through classical
harmonic analysis methods, such as the Littlewood–Paley theory (see
\cite{bigs} and also \cite{RdF}).

\subsection{Classical circle method} This reduction is essential for
employing Fourier tools on $\mathbb Z$.  Recall that
$e(\xi):=e^{2\pi i \xi}$ for $\xi\in\mathbb R$, and the Fourier
transform on $\mathbb Z$ and $\mathbb T:=\mathbb R/\mathbb Z$ is defined respectively by
\begin{align*}
\mathcal F_{\mathbb Z}f(\xi)&:=\sum_{n\in\mathbb Z}e(\xi n)f(n) \quad \text{ for } \quad \xi\in\mathbb T, \ f\in\ell^1(\mathbb Z),\\
\mathcal F_{\mathbb Z}^{-1}f(n)&:=\int_{\mathbb T}e(-\xi n)f(\xi)d\xi \quad \text{ for } \quad n\in\mathbb Z, \ f\in L^1(\mathbb T).
\end{align*}
Using the Fourier transform, we have $\mathcal F_{\mathbb Z}A_{N; \mathbb Z}^{P}(f)(\xi)=m_{N}(\xi)\mathcal F_{\mathbb Z}f(\xi)$ for $\xi\in\mathbb T$,
where the multiplier $m_N$ is the exponential sum of the form 
\begin{align}
\label{eq:28}
m_{N}(\xi) := \mathcal F_{\mathbb Z}K_N^P(\xi)= \mathbb E_{n\in [N]}e(\xi P(n)).
\end{align}
The classical circle method can be used to understand the nature of
the multiplier $m_{N}$ by analyzing the major and minor arcs. This will require the concepts of canonical
fractions and their corresponding major arcs. For $N_1, N_2\in \mathbb R_+$
we define the set of \emph{canonical fractions} by
\begin{align}
\label{eq:4}
\mathcal R_{\le N_1} := \left\{a/q\in\mathbb T\cap\mathbb Q: q\in[N_1] \text{ and } (a, q)=1\right\},
\end{align}
and the corresponding set of \emph{major arcs} by setting
\begin{align}
\label{eq:23}
\mathfrak M_{\le N_2}(\mathcal R_{\le N_1}) := \bigcup_{\theta\in \mathcal R_{\le N_1}}[\theta-N_2, \theta+N_2].
\end{align}
In what follows, the major arcs will be always considered with $N_1:=\delta^{-C_1}$ and $N_2:=N^{-d}\delta^{C_2}$ for $N\ge 1$ and $\delta\in(0, 1]$ and some fixed parameters $C_1, C_2\in\mathbb R_+$. If $N\ge1 $ is sufficiently large in terms of $\delta$, say
$N>3\delta^{-3C}$, where $C=\max\{C_1, C_2\}$, then the intervals that comprise the set of
major arcs are narrow and disjoint in $\mathbb T$.
The  set of \emph{minor arcs} is then defined as the complement of the
set of major arcs in $\mathbb T$.
We formulate Weyl's estimate for the multiplier $m_N(\xi)$ as follows:
\begin{proposition}[Weyl's inequality for the multiplier $m_N$]
For every $C\in\mathbb R_+$ there exists a small constant
$c\in(0, 1)$ such that for all $N\ge 1$ and $\delta\in (0, 1]$,
whenever $\xi$ lies outside of the major arc
$\mathfrak M_{\le N^{-d}\delta^{-C}}(\mathcal R_{\le \delta^{-C}})$,
we have
\begin{align}
\label{eq:288}
|m_{N}(\xi)|\le c^{-1}(\delta^c+N^{-c}).
\end{align}
\end{proposition}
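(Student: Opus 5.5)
The plan is to prove the contrapositive. Suppose $|m_N(\xi)|>c^{-1}(\delta^c+N^{-c})$ for a small $c$ to be chosen. We will show that $\xi$ then lies in the major arc $\mathfrak M_{\le N^{-d}\delta^{-C}}(\mathcal R_{\le \delta^{-C}})$.

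\emph{Reductions.} We may assume $N\ge \delta^{-C'}$ for a large $C'$, and also that $\delta\ge N^{-1}$. Indeed, if $N<\delta^{-C'}$, then $N^{-c}>\delta^{cC'}$, and for $c$ small in terms of $C'$ the right-hand side of \eqref{eq:288} exceeds $1\ge|m_N(\xi)|$. If $\delta<N^{-1}$, the $N^{-c}$ term alone already dominates, so we may replace $\delta$ by $N^{-1}$; the major arcs only grow as $\delta$ decreases. So it suffices to treat a single scale $\eta:=\delta^c+N^{-c}$ with $|m_N(\xi)|\ge \eta$.

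\emph{Main step: a Diophantine approximation from a large Weyl sum.} Write $P(n)=a_dn^d+\dots$ with $a_d\neq0$. We use the classical Weyl inequality, proved by $d-1$ applications of van der Corput differencing (Cauchy--Schwarz plus shifting $n\mapsto n+h$, reducing the degree each time), which ends in a linear sum bounded by $\sum_h\min(N,\|\xi a_d d!\,h_1\cdots h_{d-1}\|^{-1})$. Combined with Dirichlet's theorem and the standard counting of the $h$'s, this gives the following. If $|\mathbb E_{n\in[N]}e(\xi P(n))|\ge\eta$, then there is $q\lesssim_P \eta^{-O_d(1)}$ with $\|q\xi\|\lesssim_P \eta^{-O_d(1)}N^{-d}$. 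Here the factor $a_d d!$ is absorbed into $q$, which only costs a constant depending on $P$.

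I would quote this inverse form of Weyl's inequality directly, in the form of Green--Tao's quantitative Weyl lemma or Vaughan's Lemma 2.4 / Theorem 5.2, rather than redo the differencing. This inverse form is the main obstacle if one insists on a self-contained argument. The delicate point is that classical Weyl bounds are stated for $\xi$ close to $a/q$ with $q$ in a range $[Q,N^d/Q]$. Turning this into the statement "large sum implies small denominator and small distance" requires applying Dirichlet with parameter $Q=N^{d}\eta^{K}$ and then ruling out large $q$ via the bound $|m_N|\lesssim N^{\epsilon}(q^{-1}+N^{-1}+qN^{-d})^{2^{1-d}}$. The $N^\epsilon$ loss is harmless because $\eta\ge N^{-c}$.

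\emph{Conclusion.} Write $\xi=a/q+\beta$, reducing $a/q$ to lowest terms, which only decreases $q$. Then $q\le \eta^{-K}$ and $|\beta|\le \eta^{-K}N^{-d}$ for some $K=K(d,P)$. Since $\eta\ge\delta^c$ when $\delta\ge N^{-1}$ and $c\le1$ (more precisely $\eta\ge \delta^{c}$ always), we have $\eta^{-K}\le \delta^{-cK}$. Choosing $c\le C/K$ gives $q\le\delta^{-C}$ and $|\beta|\le N^{-d}\delta^{-C}$, absorbing the $P$-dependent constants by shrinking $c$ further. Thus $\xi\in\mathfrak M_{\le N^{-d}\delta^{-C}}(\mathcal R_{\le\delta^{-C}})$, which contradicts the hypothesis.
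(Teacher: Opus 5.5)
Your core argument is the paper's: the paper disposes of this statement by citing the classical quantitative Weyl estimate \cite[Exercise~1.1.21]{Tho}. Quoting the loss-free inverse form (Green--Tao's quantitative Weyl lemma) for the leading coefficient $a_d\xi$, and then unwinding it into the major-arc condition as in your last paragraph, is exactly what is needed. However, three of your auxiliary claims are wrong and should be removed or repaired.

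First, the reduction to $N\ge\delta^{-C'}$ is false. The inequality $N<\delta^{-C'}$ only gives $N^{-c}>\delta^{cC'}$, which is tiny when $\delta$ is: take $\delta=N^{-1}$ with $N$ large, and the right-hand side is $2c^{-1}N^{-c}<1$. The reduction is also unnecessary, since the alternative $N\lesssim\eta^{-O(1)}$ in the Green--Tao lemma is covered by taking $q=1$.

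Second, do not weaken the hypothesis to $|m_N(\xi)|\ge\delta^c+N^{-c}$. With that normalization you would need $C_P\delta^{-cK}\le\delta^{-C}$, which fails for every $\delta>C_P^{-1/C}$, however small $c$ is. Keep $\eta:=|m_N(\xi)|>c^{-1}(\delta^c+N^{-c})$, so that $C_P\eta^{-K}<C_Pc^K\delta^{-cK}\le\delta^{-C}$ once $cK\le C$ and $C_Pc^K\le 1$. The prefactor $c^{-1}$ is what ``shrinking $c$'' really uses.

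Third, your sketch of obtaining the inverse form from Vaughan's Lemma~2.4/Theorem~5.2 fails. The condition $\eta\ge N^{-c}$ is a \emph{lower} bound $N\ge\eta^{-1/c}$, so it cannot bound $N^{\epsilon}$ by a power of $\eta^{-1}$. For $q\approx N^d\eta^K$ the estimate $N^{\epsilon}(q^{-1}+N^{-1}+qN^{-d})^{2^{1-d}}\gtrsim N^{\epsilon}\eta^{K2^{1-d}}$ becomes useless as $N\to\infty$ with $\eta$ fixed, so the classical Weyl inequality alone is not an inverse theorem uniform in $N$. If you want to use classical tools instead, apply Dirichlet with $Q=N^{d-1/2}$:
\begin{itemize}
\item For $N^{1/2}<q\le N^{d-1/2}$, Weyl (applied to $a_d\xi$) gives $|m_N(\xi)|\lesssim_{P,\epsilon}N^{\epsilon-2^{-d}}$, where the $N^{\epsilon}$ is now harmless.
\item For $q\le N^{1/2}$, Lemma~\ref{lemma:1} gives $m_N(\xi)=G(a/q)\mathfrak m_N(\xi-a/q)+O(N^{-1/2})$. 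The bounds $|G(a/q)|\lesssim_{P,\epsilon}q^{-1/d+\epsilon}$ and $|\mathfrak m_N(\beta)|\lesssim_P(N^d|\beta|)^{-1/d}$ then handle $\xi$ off the major arcs.
\end{itemize}
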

In fact, inequality \eqref{eq:288} is the classical Weyl sum estimate
for normalized exponential sums; see for instance
\cite[Exercise~1.1.21, p.~16]{Tho}. The Weyl-type estimate
\eqref{eq:288}, combined with the spectral theorem, can be
used to reprove Furstenberg's mean ergodic theorem as stated in
 \eqref{eq:22}.

Here and throughout, we write $A \lesssim B$ for two quantities
$A, B\ge0$ if there is an absolute constant $C>0$, (which may change
from line to line), such that $A\le CB$. We will write $A \simeq B$
when $A \lesssim B\lesssim A$.  We will also write $\lesssim_{\delta}$
or $\simeq_{\delta}$ to emphasize that the implicit constant depends
on $\delta$.

Taking $\delta=N^{-\varepsilon}$ with $\varepsilon\in(0, 1)$ sufficiently small, say
$\varepsilon C<1/4$, and $N$ sufficiently large, we gain a negative power of $N$ in
\eqref{eq:288} and this, combined with Plancherel's theorem, yields
\begin{align}
\label{eq:55}
\|A_{N; \mathbb Z}^{P}f\|_{\ell^2(\mathbb Z)}\lesssim_{\varepsilon} N^{-c\varepsilon}\|f\|_{\ell^2(\mathbb Z)},
\end{align}
for every function $f\in \ell^2(\mathbb Z)$ whose Fourier transform
vanishes on major arcs
$\mathfrak M_{\le N^{-d}\delta^{-C}}(\mathcal R_{\le \delta^{-C}})$. Estimate
\eqref{eq:55} is critical to control $A_{N; \mathbb Z}^{P}$ on minor arcs as
$N^{-c\varepsilon}$ is summable for all $N\in\mathbb D_{\tau}$.

\subsection{Basic Ionescu--Wainger multiplier theorem}
One of the challenges is how to interpret inequality \eqref{eq:55} in
$\ell^p(\mathbb Z)$ spaces with $p\in(1, \infty)$.  For example, it is not
clear how to make use of Weyl's inequality, as Plancherel's theorem is
not available in $\ell^p(\mathbb Z)$ when $p \neq 2$.

A way to overcome this difficulty is to proceed as follows:
\begin{enumerate}[label*={(\roman*)}]
\item Localize the  major arcs in the $\ell^p(\mathbb Z)$ spaces for  $p\in(1, \infty)$ by using  smooth bump functions.
 
\smallskip
\item Instead of working with the exponential sum $m_N$ itself, one works with
the corresponding averaging operator $A_{N; \mathbb Z}^{P}$. We recall that $\mathcal F_{\mathbb Z}A_{N; \mathbb Z}^{P}f=m_{N}\mathcal F_{\mathbb Z}f$ for all $f\in\ell^2(\mathbb Z)$.

\end{enumerate}

Taking a smooth even cutoff function $\eta \colon \mathbb R\to[0, 1]$ such
that
\begin{align}
\label{eq:58}
\ind{[-1/4, 1/4]}\le \eta\le \ind{(-1/2, 1/2)},
\end{align}
we define for
$N_1, N_2\in\mathbb R_+$ the smooth projection operator
$\Pi[\le N_1, \le N_2] \colon \ell^2(\mathbb Z)\to \ell^2(\mathbb Z)$ by setting
\begin{align}
\label{eq:290}
\mathcal F_{\mathbb Z}\left(\Pi\left[\le N_1, \le N_2\right]f\right)(\xi)
 := \sum_{\theta\in \mathcal R_{\le N_1}}\eta\left(N_2^{-1}(\xi-\theta)\right)\mathcal F_{\mathbb Z}f(\xi).
\end{align}
These projections will be called the Ionescu--Wainger projections and will allow us to
effectively localize the major arcs. If $N\ge1 $ is sufficiently large in terms of $\delta$, say
$N>3\delta^{-3C}$, we note that
\begin{itemize}
\item[(a)] The operator
$\Pi[\le \delta^{-C}, \le N^{-d}\delta^{-C}]$ is a contraction on
$\ell^2(\mathbb Z)$, by
Plancherel's theorem,  since the arcs in
$\mathfrak M_{\le N^{-d}\delta^{-C}}(\mathcal R_{\le \delta^{-C}})$ are disjoint, and $\sum_{\theta\in \mathcal R_{\le N_1}}\eta\left(N_2^{-1}(\xi-\theta)\right)\le 1$.

\item[(b)] The operator $\Pi[\le \delta^{-C}, \le N^{-d}\delta^{-C}]$
is bounded on $\ell^p(\mathbb Z)$ for all $p\in[1, \infty]$, since the estimate
$\# \mathcal R_{\le \delta^{-C}}\le \delta^{-2C}$ implies the crude
bound
\begin{align}
\label{eq:292}
\big\|\Pi[\le \delta^{-C}, \le N^{-d}\delta^{-C}]f\big\|_{\ell^p(\mathbb Z)}
\lesssim \delta^{-2C} \|f\|_{\ell^p(\mathbb Z)}.
\end{align}
\end{itemize}

The bound from \eqref{eq:292} will not be very useful. However, the
$\ell^p(\mathbb Z)$ norms of the Ionescu--Wainger projections \eqref{eq:290}
have reasonably good growth in terms of the size of the set of
canonical fractions $\mathcal R_{\le \delta^{-C}}$, which can be subsummed as the following theorem:
\begin{theorem}[Ionescu--Wainger theorem for projections]
\label{thm:IWproj}
For every $p\in(1, \infty)$ and any parameters $C_1, C_2\in\mathbb R_+$ and
$\rho\in(0, 1)$, whenever
\begin{align}
\label{eq:296}
N>D_p\delta^{-D_p}
\end{align}
for some large $D_p\in\mathbb R_+$ (possibly depending on $C_1, C_2$), it follows that for every $f\in \ell^p(\mathbb Z)$ one has
\begin{align}
\label{eq:293}
\big\|\Pi[\le \delta^{-C_1}, \le N^{-d}\delta^{-C_2}]f\big\|_{\ell^p(\mathbb Z)}
\lesssim_{p, \rho} \delta^{-C_1\rho} \|f\|_{\ell^p(\mathbb Z)}.
\end{align}
The implicit constant in \eqref{eq:293} is independent on $\delta$.
\end{theorem}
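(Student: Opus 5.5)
The plan is to follow the Ionescu--Wainger strategy as streamlined in \cite{KMPW} and Mirek--Stein--Zorin-Kranich. Fix $p\in(1,\infty)$ and $\rho\in(0,1)$. By duality and interpolation with the trivial $\ell^2(\mathbb Z)$ bound (a), it suffices to treat even integers $p=2r$, with $r\in\mathbb Z_+$ large in terms of $\rho$. The first step is an arithmetic decomposition of the fractions. We split $\mathcal R_{\le \delta^{-C_1}}$ into $O_\rho(1)$ families, with the number depending only on $\rho$. Each family $\mathcal R^{(i)}$ consists of fractions whose denominators have the form $q=q_1\cdots q_k Q$, where $k\le k(\rho)$ and the $q_j$ are pairwise coprime prime powers from a set of ``large'' primes. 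The factor $Q$ divides a single integer $Q_0\le e^{\delta^{-C_1\rho}}$ (the product of small prime powers). The key combinatorial property is that within each family, any fraction admits a unique representation as a sum $\theta=\theta_0+\theta_1+\dots+\theta_k$. Here $\theta_0$ has denominator dividing $Q_0$, the $\theta_j$ come from sets $S_j$ of fractions with prime-power denominators, and $\#S_j\lesssim\delta^{-C_1\rho}$ roughly. The condition $N>D_p\delta^{-D_p}$ guarantees that the arcs of width $N^{-d}\delta^{-C_2}$ around all fractions with denominators up to $Q_0$-sized combinations remain disjoint. Strictly, one needs $N^{-d}$ small relative to $1/q^2$ for the relevant $q$, and this is where the exponent $D_p$ enters.

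The second step handles the $Q_0$ part, which costs nothing. The multiplier $\sum_{\theta_0}\eta(N_2^{-1}(\xi-\theta_0))$ is $Q_0^{-1}\mathbb Z$-periodic with a single bump per period. By a sampling/transference lemma (Magyar--Stein--Wainger type), its $\ell^p(\mathbb Z)$ norm is bounded by the $L^p(\mathbb R)$ norm of the multiplier $\eta(N_2^{-1}\cdot)$, i.e.\ by $O(1)$ uniformly in $Q_0$, provided the bump width is $\lesssim 1/Q_0$. This is again ensured by the largeness of $N$.

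The third step, which is the main obstacle, handles the large-prime part by a square function argument. For a family with $k$ prime factors, we write the projection as $\sum_{\theta_1\in S_1}\cdots\sum_{\theta_k\in S_k}$ of modulated copies $e(\theta\cdot)\,T_{\theta_0}f$. We then estimate the $\ell^{2r}$ norm by expanding $|\sum_\theta e(-\theta x)g(x)|^{2r}$. Here we use the uniqueness of representation: the sums $\theta_1+\dots+\theta_r-\theta_{r+1}-\dots-\theta_{2r}$ over distinct prime-power denominators are ``dissociated'' enough that the expansion behaves like an orthogonal sum. This yields a bound of the form $(\#S)^{1/2-1/(2r)}$ times a Rademacher--Marcinkiewicz--Zygmund vector-valued bound, with the latter controlled via the vector-valued version of the $Q_0$ sampling step. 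The remaining losses are $\prod_j(\#S_j)^{O(1/r)}$ together with an $O_{r,k}(1)$ constant. Choosing $r$ large and using $\#S_j\le\delta^{-C_1}$ makes the total loss at most $\delta^{-C_1\rho}$. Carrying out this orthogonality rigorously, namely ensuring that the interaction terms in the $2r$-fold expansion are genuinely controlled by disjointness of arcs, is where the bulk of the work lies. I would follow the induction on $k$ in \cite[Theorem 3.3]{KMPW}.

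Finally, summing over the $O_\rho(1)$ families and undoing the duality and interpolation gives \eqref{eq:293}, with an implicit constant depending only on $p$ and $\rho$.
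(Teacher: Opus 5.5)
The paper gives no argument of its own for Theorem~\ref{thm:IWproj}; it quotes it as a special case of the Ionescu--Wainger multiplier theorem for canonical fractions in \cite{KMPW}. Your sketch has a genuine gap, and it sits exactly where the new content of that theorem lies. What you describe is the classical Ionescu--Wainger argument, and its second step does not run under hypothesis \eqref{eq:296}.

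\textbf{The sampling step needs exponentially small width.} The Magyar--Stein--Wainger sampling principle for the $Q_0^{-1}\mathbb Z$-periodic multiplier requires the bump width $N^{-d}\delta^{-C_2}$ to be $\lesssim Q_0^{-1}$. You allow $Q_0$ up to $e^{\delta^{-C_1\rho}}$, and $Q_0$ is super-polynomial in $\delta^{-1}$ whenever the small-prime threshold grows with $\delta^{-1}$. But \eqref{eq:296} only makes $N^{-d}\delta^{-C_2}$ polynomially small in $\delta$. So the claim that this is ``ensured by the largeness of $N$'' is false. The asserted disjointness of arcs around fractions with denominators of size $Q_0\delta^{-kC_1}$ fails for the same reason.

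Carried out correctly, your argument reproduces the original theorem: the bound for the enlarged Ionescu--Wainger set of fractions, under $\log N\gtrsim\delta^{-C\gamma}$. This is precisely the limitation recorded in the second remark after Theorem~\ref{thm:IWproj}, and removing it is the point of \cite{KMPW}.

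This is not fixed by adjusting parameters. Keeping every sampled modulus polynomial in $\delta^{-1}$ forces a bounded small-prime threshold $K=K(\rho)$, so that $Q_0\le\delta^{-C_1\pi(K)}$. But then a denominator can carry about $C_1\log(1/\delta)/\log K$ large prime factors. Neither your count of families nor your $O_{r,k}(1)$ constants stay bounded, and you would need losses at most geometric in $k$. Nothing in the sketch provides that.

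\textbf{A family of canonical fractions is not a product set.} This is a second, independent problem. Your second and third steps treat each family as the full sumset of $Q_0^{-1}\mathbb Z/\mathbb Z$ with $S_1,\ldots,S_k$: ``a single bump per period'', and a sum over all $\theta_1\in S_1,\ldots,\theta_k\in S_k$. In fact the constraint $q_1\cdots q_kQ\le\delta^{-C_1}$ couples the factors, so only part of that sumset occurs. An $\ell^p(\mathbb Z)$ bound for a multiplier does not pass to the multiplier obtained by deleting some of its bumps. This is why Ionescu and Wainger worked with the enlarged denominator set $P_{\le N}\supseteq[N]$, and why the canonical case was the open question of \cite[Remark~3, p.~361]{IW}.

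\textbf{The loss bookkeeping does not close.} A factor $(\#S)^{1/2-1/(2r)}$ gets worse, not better, as $r$ increases. Also, $\#S_j\lesssim\delta^{-C_1\rho}$ cannot hold for all the sets involved. The fractions with large prime-power denominators up to $\delta^{-C_1}$ number about $\delta^{-2C_1}$ up to logarithmic factors, and they must be covered by only $O_\rho(1)$ sets $S_j$.
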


The bound in \eqref{eq:293} represents a significant quantitative
improvement compared to inequality \eqref{eq:292}. Theorem
\ref{thm:IWproj} was only recently established by the author in
collaboration with Kosz, Peluse, Wan, and Wright in \cite{KMPW}, as a
consequence of the much more general Ionescu--Wainger multifrequency
multiplier theorem for the set of canonical fractions from
\eqref{eq:4}. In fact, the authors in \cite{KMPW} provided an
affirmative answer to the Ionescu–Wainger question raised in
\cite[Remark~3, p.~361]{IW} concerning the Ionescu--Wainger
multifrequency multiplier theorem for the set of canonical
fractions. Inequality \eqref{eq:293} will be essential for
interpreting Weyl's inequality in $\ell^p(\mathbb{Z})$ spaces, see
Theorem \ref{Weyllp} below.

\smallskip

Here we gather  a few comments about Theorem \ref{thm:IWproj}.
\begin{enumerate}[label*={\arabic*}.]

\item Ionescu and Wainger in a groundbreaking paper \cite{IW},
established $\ell^p(\mathbb Z^k)$ bounds for discrete singular integral Radon
transforms. Their result follows from a deep Ionescu--Wainger multiplier theorem for the
set of the so-called \textit{Ionescu--Wainger fractions}, defined by
\begin{align*}
\tilde{\mathcal R}_{\le N} := \left\{ a/q \in\mathbb T\cap\mathbb Q: q\in P_{\le N} \text{ and } (a, q)=1 \right\},
\end{align*}
where $P_{\le N}$ is a subtle set of natural numbers with certain
prime power factorization. 

\smallskip

\item The Ionescu--Wainger multiplier theorem quickly became an
indispensable tool in the study of discrete analogues in harmonic
analysis \cite{KMT, M10, MSS, MST1, MST2, MSZ3}.  The
original Ionescu--Wainger theorem \cite{IW} implies inequality
\eqref{eq:293} for the projections from \eqref{eq:290} with the
set of the Ionescu--Wainger fractions
$\tilde{\mathcal R}_{\le \delta^{-C}}$ in place of the set of
canonical fractions $\mathcal R_{\le \delta^{-C}}$ whenever
$\log N\gtrsim \delta^{-C\gamma}$
holds for arbitrarily small $\gamma\in (0, 1)$, instead of condition \eqref{eq:296}.

\smallskip

\item The Ionescu--Wainger theorem \cite{IW} was originally developed for
scalar-valued multipliers. An important feature of their conclusion is
that the $\ell^p(\mathbb Z)$ norm of the Ionescu--Wainger multiplier
operators corresponding to the set of the Ionescu--Wainger fractions
$\tilde{\mathcal R}_{\le \delta^{-C}}$ is controlled by a multiple of
$\log(\delta^{-C}+1)^D$, where
$D := \lfloor 2\gamma^{-1}\rfloor+1$.  Their proof is based on
an intricate 
super-orthogonality phenomena. A slightly different proof with the
factor $\log(\delta^{-C}+1)$ in place of $\log(\delta^{-C}+1)^D$
follows from \cite{M10}.  The latter approach helped
clarify the role of underlying square functions and orthogonalities
(see also \cite[Section~2]{MSZ3}). The Ionescu--Wainger theory, among
other topics, was discussed by Pierce \cite{Pierce} in the context of
super-orthogonality phenomena.  Finally, we refer to the
recent paper of Tao \cite{TaoIW}, where a uniform bound in place of
$\log(\delta^{-C}+1)$ was obtained.

\end{enumerate}

\subsection{Weyl's inequality in $\ell^p(\mathbb Z)$ spaces}
The Ionescu--Wainger theorem (see Theorem \ref{thm:IWproj}) is the key
mechanism for interpreting Weyl's inequality in $\ell^p(\mathbb{Z})$
spaces.

\begin{theorem}[Weyl's inequality in $\ell^p(\mathbb Z)$ spaces]
\label{Weyllp}
Let $p\in(1, \infty)$ and $P\in\mathbb Z[{\rm n}]$ with ${\rm deg}P=d$ be fixed. For all
$C_1, C_2\in\mathbb R_+$ there exists a small constant $c\in(0, 1)$ possibly
depending on $p, C_1, C_2, P$ such that the following holds for all
$N\ge 1$ and $\delta\in(0, 1]$. For every
$f\in \ell^p(\mathbb Z)\cap\ell^2(\mathbb Z)$ whose Fourier transform vanishes on
major arcs
$\mathfrak M_{\le N^{-d}\delta^{-C_2}}(\mathcal R_{\le \delta^{-C_1}})$
we have
\begin{align}
\label{eq:1}
\|A_{N; \mathbb Z}^{P}f\|_{\ell^p(\mathbb Z)}\le c^{-1}(\delta^c+N^{-c}) \|f\|_{\ell^p(\mathbb Z)}.
\end{align}
\end{theorem}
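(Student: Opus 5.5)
We interpolate between an $\ell^2$ estimate with power decay, coming from the classical Weyl inequality \eqref{eq:288} and Plancherel, and a crude $\ell^{p_0}$ estimate for some $p_0$ on the far side of $p$ from $2$. The Ionescu--Wainger projections of Theorem \ref{thm:IWproj} are what make the $\ell^{p_0}$ side cost only $\delta^{-C_1\rho}$.

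\emph{Preliminary reductions.} If $N\le D\delta^{-D}$ for a large $D$, then $\delta\lesssim N^{-1/D}$. In that case it suffices to gain $N^{-c}$, and we replace $\delta$ by $\delta':=N^{-\varepsilon}\ge\delta$; the major arcs only shrink as $\delta$ grows, since $\delta^{-C_i}$ decreases. So we may assume $N>D_p\delta^{-D_p}$. The same remark lets us replace $\delta$ by $\max\{\delta,N^{-\varepsilon}\}$ throughout, so only a gain of $\delta^{c}$ needs to be tracked.

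\emph{Main step: a projection reducing to the minor arcs.} Fix $p\in(1,\infty)$ and take $p_0$ strictly between $1$ and $p$ (if $p<2$) or strictly larger than $p$ (if $p>2$), chosen so that $p$ interpolates between $2$ and $p_0$ with some $\theta\in(0,1]$; for $p=2$ nothing is needed. Choose slightly smaller parameters $C_1'<C_1$, $C_2'<C_2$, say $C_i'=C_i/2$, and let
\[
\Pi:=\Pi[\le \delta^{-C_1'}, \le N^{-d}\delta^{-C_2'}].
\]
By \eqref{eq:58}, the symbol of $\Pi$ is supported in $\mathfrak M_{\le N^{-d}\delta^{-C_2'}/2}(\mathcal R_{\le\delta^{-C_1'}})$, which is contained in the arcs on which $\mathcal F_{\mathbb Z}f$ vanishes. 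Hence $\Pi f=0$, and so $A_{N;\mathbb Z}^P f=A_{N;\mathbb Z}^P(I-\Pi)f$. Define the linear operator $T:=A_{N;\mathbb Z}^P(I-\Pi')$, where $\Pi'=\Pi[\le \delta^{-C_1''}, \le N^{-d}\delta^{-C_2''}]$ with even smaller parameters $C_i''$; then $Tf=A_{N;\mathbb Z}^Pf$ for our $f$. Two bounds on $T$ follow:
\begin{enumerate}[label*={(\roman*)}]
\item On $\ell^2$: the symbol $m_N(1-\sum_\theta\eta(\cdot))$ vanishes on $\mathfrak M_{\le N^{-d}\delta^{-C_2''}/4}(\mathcal R_{\le\delta^{-C_1''}})$. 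Outside this set, Weyl's inequality \eqref{eq:288} with $C=\min\{C_1'',C_2''\}$ (adjusting the factor $1/4$ by a shift in $\delta$) gives $|m_N|\lesssim\delta^{c}+N^{-c}$. Since $0\le 1-\sum\eta\le1$, Plancherel yields $\|T\|_{\ell^2\to\ell^2}\lesssim \delta^{c}+N^{-c}$.
\item On $\ell^{p_0}$: $A_{N;\mathbb Z}^P$ is an average of translates, hence a contraction. Theorem \ref{thm:IWproj} with a small $\rho$ gives $\|I-\Pi'\|_{\ell^{p_0}\to\ell^{p_0}}\lesssim_{\rho} \delta^{-C_1''\rho}$.
\end{enumerate}
Riesz--Thorin then gives
\[
\|T\|_{\ell^p\to\ell^p}\lesssim (\delta^{c}+N^{-c})^{1-\theta}\,\delta^{-C_1''\rho\theta}.
\]
Choosing $\rho$ small compared with $c(1-\theta)/C_1''$, this is $\lesssim\delta^{c'}+N^{-c'}$ after using $\delta\ge N^{-\varepsilon}$ from the reduction. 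This proves \eqref{eq:1}.

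\emph{Main obstacle.} The delicate point is the bookkeeping of the arcs. The smooth cutoff $\eta$ in \eqref{eq:290} only equals $1$ on quarter-width arcs, so the vanishing hypothesis on $\mathcal F_{\mathbb Z}f$ must cover the support of the projection, while Weyl's inequality must apply off the region where the symbol of $\Pi'$ equals $1$. This forces the nested choice $C_i''<C_i'\le C_i$ and the harmless factor-$4$ adjustments, which are absorbed into $\delta$. A second point is that the condition \eqref{eq:296} must hold for the new parameters; this is exactly what the initial reduction to $N>D_p\delta^{-D_p}$ guarantees.
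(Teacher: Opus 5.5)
Your proposal is correct and follows the paper's proof essentially step for step. You annihilate $f$ with an Ionescu--Wainger projection at shrunken parameters; the paper uses the single choice $C=\frac12\min\{C_1,C_2\}$, and your extra projection $\Pi$ is redundant since $\Pi'f=0$ already. You bound $A_{N;\mathbb Z}^P(I-\Pi')$ by $\delta^{c}$ on $\ell^2$ via Weyl and Plancherel and by $\delta^{-C\rho}$ on $\ell^{p_0}$ via Theorem~\ref{thm:IWproj}, interpolate with $\rho$ small, and handle $N\le D\delta^{-D}$ by trading $\delta$ for a power of $N$. Your treatment of the quarter-width arcs where $\eta=1$ and of the small-$N$ reduction is more explicit than the paper's, which passes over both quickly.
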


A few remarks about the  Weyl inequality in $\ell^p(\mathbb Z)$ spaces  are in order.

\begin{enumerate}[label*={\arabic*}.]
\item Inequality \eqref{eq:1} is a far-reaching generalization of
Weyl's inequality for exponential sums \eqref{eq:288} to $\ell^p(\mathbb Z)$
spaces for all $p\in(1, \infty)$. It is important that the bounds in
\eqref{eq:1} are consistent with the bounds in the classical Weyl
inequality for exponential sums \eqref{eq:288}.

\item An essential new tool that makes these estimates available is
the Ionescu--Wainger multifrequency multiplier theorem for the set of
canonical fractions, see inequality \eqref{eq:293}.

\item The Ionescu--Wainger theory should be understood as a tool that
allows us to elevate exponential sum estimates into purely functional
analytic language. This interpretation was particularly useful in the
context of the multilinear Weyl inequality recently proved in
\cite{KMT, KMPW}.
\end{enumerate}

\begin{proof}[Proof of Theorem \ref{Weyllp}]
Suppose that
$\mathcal F_{\mathbb Z} f$ vanishes on the major arcs
${\mathfrak M}_{\leq N^{-d}\delta^{-C_2}}(\mathcal R_{\leq \delta^{-C_1}})$
for some $C_1, C_2\in\mathbb R_+$. Since
$N^{-d}\delta^{-C} \leq N^{-d}\delta^{-C_2}$ and
$\delta^{-C} \leq \delta^{-C_1}$ for $C := \frac12\min\{C_1, C_2\}$, we see that
\begin{align}
\label{eq:14}
f=(1-\Pi[\le \delta^{-C} \le N^{-d}\delta^{-C}])f, \qquad \text{ since } \qquad
\Pi[\le \delta^{-C} \le N^{-d}\delta^{-C}]f\equiv 0.
\end{align}

We fix $p\in(1, \infty)$, then we choose ${p_0} \in 2\mathbb Z$ such that
$p \in [p_0',p_0]$. By Theorem \ref{thm:IWproj}, with this choice of
${p_0} \in 2\mathbb Z$, there exists a large constant $C_{p_0} \ge 1$,
(possibly depending on $C := \frac12\min\{C_1, C_2\}$), but independent of $N$ and $\delta$, such
that if $N \ge D_{p_0}  \delta^{-D_{p_0} }$,
then for every $q \in [p_0',p_0]$, and every fixed constant $\rho \in (0, 1)$, we have
\begin{align}
\label{eq:2}
\|\Pi[\le \delta^{-C} \le N^{-d}\delta^{-C}]f\|_{\ell^{q} (\mathbb Z)}\lesssim \delta^{-\rho C}\|f\|_{\ell^{q} (\mathbb Z)},
\quad \text{ for all } \quad f\in \ell^{q} (\mathbb Z),
\end{align}
and for some $c\in(0, 1)$, by \eqref{eq:288}, we have 
\begin{align}
\label{eq:5}
|m_N(\xi)|\lesssim \delta^c,
\end{align}
whenever $\xi$ lies outside of the major arc
$\mathfrak M_{\le N^{-d}\delta^{-C}}(\mathcal R_{\le \delta^{-C}})$.

Therefore, by \eqref{eq:5} and Plancherel's theorem, we obtain
\begin{align}
\label{eq:7}
\big\|A_{N; \mathbb Z}^{P}(f-\Pi[\le \delta^{-C} \le N^{-d}\delta^{-C}]f)\big\|_{\ell^2(\mathbb Z)}
\lesssim \delta^{c}\|f\|_{\ell^2(\mathbb Z)},
\end{align}
since $(1-\Pi[\le \delta^{-C} \le N^{-d}\delta^{-C}])f$ has vanishing Fourier transform on the corresponding major arcs.
By \eqref{eq:2} we conclude that
\begin{align}
\label{eq:13}
\big\|A_{N; \mathbb Z}^{P}(f-\Pi[\le \delta^{-C} \le N^{-d}\delta^{-C}]f)\big\|_{\ell^{q}(\mathbb Z)}
\lesssim \delta^{-\rho C}\|f\|_{\ell^{q}(\mathbb Z)}.
\end{align}
Interpolating between \eqref{eq:7} and \eqref{eq:13} for $q\in\{p_0', p_0\}$, we obtain
\begin{align}
\label{eq:16}
\big\|A_{N; \mathbb Z}^{P}(f-\Pi[\le \delta^{-C} \le N^{-d}\delta^{-C}]f)\big\|_{\ell^{p}(\mathbb Z)}
\lesssim \delta^{c_p}\|f\|_{\ell^{p}(\mathbb Z)},
\end{align}
for some $c_p\in(0, 1)$, since $\rho\in(0, 1)$ is arbitrary, and can
be made as small as we need.

If $N\le D_{p_0}  \delta^{-D_{p_0} }$, then by the previous case,
the same conclusion as in \eqref{eq:16} can be derived with $N^{-c_p}$
in place of $\delta^{c_p}$. Inequality \eqref{eq:1} follows by invoking \eqref{eq:14}. This completes the proof. 
\end{proof}

\section{$r$-variational estimates for Bourgain's averages: all together}
Having all these tools from the previous section at our disposal, we are now ready to present a fairly rigorous proof of $r$-variational inequality \eqref{eq:17} for Bourgain's averages $A_{N; \mathbb Z}^P$ from \eqref{eq:24}.

We first adjust the Ionescu--Wainger projections to meet our needs. 

\subsection{Dyadic Ionescu--Wainger projections}
Let $\eta \colon \mathbb R\to[0, 1]$ be a smooth and even function
satisfying \eqref{eq:58}. For any $n, \xi\in\mathbb R$, we set $\eta_{\le n}(\xi):= \eta_{[\le 2^n]}(\xi) = \eta(2^{-n}\xi)$.
For any $l\in\mathbb N$, using definition of canonical fractions \eqref{eq:4}, we set $\Sigma_{\leq l}  :=  
\mathcal R_{\le 2^l}$, and $\Sigma_l  :=  \Sigma_{\leq l} \setminus \Sigma_{\leq l-1}.$
Then $\# \Sigma_{\leq l} \ \le \ 2^{2l}.$
Similarly, for any  $l\in\mathbb N$, and $m\in\mathbb Z$, using definition \eqref{eq:23}, we introduce the dyadic ``major arcs'' by
\begin{align*}
{\mathcal M}_{\leq l, \leq m}  :=
\mathfrak M_{\le 2^m}(\Sigma_{\le l})
\end{align*}
We note that ${\mathcal M}_{\le l, \leq m}$ is nondecreasing in both $l$ and $m$, and   if
$m \leq - 2l - 2$, then the arcs
$[\theta - 2^m, \theta + 2^m]$ that comprise
${\mathcal M}_{\leq l, \leq m}$ are pairwise disjoint. We also define
\begin{align*}
{\mathcal M}_{l, \leq m}  :=  {\mathcal M}_{\leq l, \leq m} \setminus {\mathcal M}_{\leq l-1, \leq m}
\quad \text{and} \quad
{\mathcal M}_{l,m} :=  {\mathcal M}_{l,\leq m} \setminus {\mathcal M}_{l,\leq m-1}.
\end{align*}

Using  \eqref{eq:290} we define the dyadic Ionescu--Wainger  projections $\Pi_{\leq l, \leq m} \colon \ell^2(\mathbb Z) \to \ell^2(\mathbb Z)$ by 
\begin{align}
\label{eq:20}
\Pi_{\leq l, \leq m} f(x) := \Pi [\le 2^l, \le 2^m]f(x) ,\qquad f \in \ell^2(\mathbb Z), \, x\in\mathbb Z.
\end{align}

\begin{remark}
The following properties are clear from \eqref{eq:20}.
\label{rem:2}
\begin{enumerate}[label*={(\roman*)}]
\item\label{IW4} $\Pi_{\leq l, \leq m}$ is a self-adjoint (and
real symmetric) operator. The function $\mathcal F_{\mathbb Z} (\Pi_{\leq l, \leq m} f)$ is supported on the set ${\mathcal M}_{\leq l, \leq m}$, and if
$\mathcal F_{\mathbb Z} f$ vanishes on that set, then $\Pi_{\leq l, \leq m} f \equiv 0$.
\item\label{IW5} If $m \leq - 2l - 2$ and $\mathcal F_{\mathbb Z} f$ is
supported on ${\mathcal M}_{\leq l, \leq m-2}$, then
$\Pi_{\leq l, \leq m} f = f$.
\item\label{IW6} If $m \leq - 2l - 2$, then
$\Pi_{\leq l, \leq m}$ is a contraction on $\ell^2(\mathbb Z)$.
\end{enumerate}
\end{remark}

A consequence of Theorem~\ref{thm:IWproj} is the following important bound.

\begin{proposition}
\label{mult} For every $k\in\mathbb Z_+$ and $p\in(1, \infty)$ and $\rho\in(0, 1)$ there
exists a constant $C_p\in\mathbb R_+$ such that for every $l\in\mathbb N$, and $m \in \mathbb Z$,  if
$m \leq - 6 \max\{p, p'\}(l+1)$,
then 
\begin{align}
\label{eq:116}
\| \Pi_{\leq l, \leq m} f \|_{\ell^p(\mathbb Z)} \le C_{p} (2^{C_p \rho l}\ind{l\ge 10}+\ind{l<10})\|f\|_{\ell^p(\mathbb Z)}.
\end{align}
\end{proposition}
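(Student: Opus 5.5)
The plan is to reduce Proposition \ref{mult} directly to Theorem \ref{thm:IWproj} by a change of parameters, and to handle small $l$ by the crude bound \eqref{eq:292}.

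By \eqref{eq:20}, $\Pi_{\le l,\le m}=\Pi[\le 2^l,\le 2^m]$. To match the form of Theorem \ref{thm:IWproj}, set $\delta:=2^{-l}$ and $C_1:=1$. We then need to write $2^m=N^{-d}\delta^{-C_2}$ for some $N$ satisfying \eqref{eq:296}. The simplest choice is $C_2:=1$ (or any fixed constant) and $N:=(2^{l-m})^{1/d}$, so that $N^{-d}\delta^{-1}=2^{m-l}\cdot 2^{l}=2^m$. The hypothesis $m\le -6\max\{p,p'\}(l+1)$ gives $N^d=2^{l-m}\ge 2^{(6\max\{p,p'\}+1)(l+1)}$. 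Hence $N\ge 2^{c_{p,d}(l+1)}$ with $c_{p,d}$ growing linearly in $\max\{p,p'\}$. We must check that this exceeds $D_p\delta^{-D_p}=D_p2^{D_pl}$. This is the point to watch: Theorem \ref{thm:IWproj} only says $D_p$ is ``some large constant,'' and the numerology $6\max\{p,p'\}$ in the hypothesis should be read as the quantitative form of \eqref{eq:296} coming from \cite{KMPW}. I would invoke that explicit dependence, which is linear in $\max\{p,p'\}$ (up to the factor $d$, absorbed by taking $C_2$ or the scaling appropriately). If needed, I would instead choose $C_2$ large and $N$ correspondingly, since the freedom in $C_2$ lets us move the scale condition around.

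For $l\ge 10$, once \eqref{eq:296} holds, \eqref{eq:293} gives $\|\Pi_{\le l,\le m}f\|_{\ell^p}\lesssim_{p,\rho}\delta^{-\rho}\|f\|_{\ell^p}=2^{\rho l}\|f\|_{\ell^p}$. This is bounded by $C_p2^{C_p\rho l}$ for any $C_p\ge1$ exceeding the implicit constant. The threshold $l\ge10$ also guarantees that $D_p$ factors such as $D_p2^{D_pl}\le 2^{2D_pl}$ can be absorbed.

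For $l<10$, the set $\Sigma_{\le l}$ has at most $2^{20}$ elements, and $m\le-2l-2$ holds, so the arcs are disjoint. The crude bound \eqref{eq:292} applies: each single-arc multiplier $\eta(2^{-m}(\xi-\theta))$ is a modulated dilate of a smooth bump whose kernel has $\ell^1$ norm $O(1)$ uniformly in $m\le0$. Therefore $\|\Pi_{\le l,\le m}\|_{\ell^p\to\ell^p}\lesssim 2^{2l}\le 2^{20}$, a constant. The main obstacle is thus purely the bookkeeping in the first case, namely verifying that the stated scale condition implies \eqref{eq:296}. Everything else is immediate.
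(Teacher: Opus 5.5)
Your plan follows the paper's route. The paper states Proposition \ref{mult} simply as a consequence of Theorem \ref{thm:IWproj}, which amounts to taking $\delta=2^{-l}$ and $C_1=1$, and gives no further argument. In particular, the paper also leaves implicit that the threshold $m\le -6\max\{p,p'\}(l+1)$ is the explicit form of \eqref{eq:296} from \cite{KMPW}, so importing that explicit condition is the right way to close your step.

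Your fallback of enlarging $C_2$ would not work, though. With $2^m=N^{-d}2^{C_2 l}$, the requirement $N>D_p\,2^{D_p l}$ becomes $-m>(dD_p-C_2)\,l+d\log_2 D_p$. Since $D_p$ may depend on $C_2$, it can grow along with it. The condition is intrinsically a bound on the width $2^m$ against a power of $2^{-l}$, and the statement of Theorem \ref{thm:IWproj} alone never tells you that this power is at most $6\max\{p,p'\}$.

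As a side remark, read literally (the constant $C_p$ may depend on $\rho$), the proposition already follows for all $l$ from your crude bound $\lesssim 2^{2l}$ by taking $C_p\ge 2/\rho$. The Ionescu--Wainger input is needed only for the intended bound of the form $C_{p,\rho}\,2^{\rho l}$.
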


\subsection{Minor arc estimates}
We fix a polynomial $P\in\mathbb Z[{\rm n}]$ and $1<p<\infty$. Then we
choose $p_0\in 2\mathbb Z_+$ such that $1<p_0'<p<p_0<\infty$.  The parameter
$p_0$ is a sufficiently large even integer, which serves as the
parameter $p$ from Proposition~\ref{mult} to ensure that
\eqref{eq:116} can be applied. If necessary, it may be further
adjusted to the interpolation arguments that will be used
throughout this section.

Given a large constant $C_0\in \mathbb Z_+$ possibly depending on $p_0$ and $P$, and a small constant
\begin{align*}
0<\alpha<(10^6 dp_0)^{-1},
\end{align*}
using the log-scale notation (i.e. ${\rm Log\,} N:=\lfloor \log N\rfloor$) for any $N\ge C_0$,  we define the quantities 
\begin{align*}
l_{(N)} := {\rm Log\, } N^\alpha\in\mathbb Z_+ \quad \text{and} \quad L_{(N)} := {\rm Log\,} N-l_{(N)}.
\end{align*}
Then we can write
\[
A_{N, \mathbb Z}^Pf=A_{N, \mathbb Z}^P\Pi_{\leq l_{(N)}, \leq -d L_{(N)}}f
+A_{N, \mathbb Z}^P\big(1-\Pi_{\leq l_{(N)}, \leq -d L_{(N)}}\big)f.
\]
By the Weyl inequality on $\ell^p(\mathbb Z)$ spaces  (Theorem \ref{Weyllp} with $\delta=2^{-l_{(N)}}$, $C_1=1$, and $C_2=d$) there exists a small absolute constant $c_p\in(0, 1)$ such that for all $f\in \ell^p(\mathbb Z)$ we have 
\begin{align}
\label{eq:11}
\big\|A_{N, \mathbb Z}^P\big(1-\Pi_{\leq l_{(N)}, \leq -d L_{(N)}}\big)f \big\|_{\ell^p(\mathbb Z)} 
\lesssim N^{-c_p} \|f\|_{\ell^p(\mathbb Z)}.
\end{align}

\subsection{Major arc estimates} 
\label{sec:4}
In view of \eqref{eq:11} and its summability over $N\in \mathbb D_{\tau}$, the proof of \eqref{eq:17} is reduced to showing that for all $f\in \ell^p(\mathbb Z)$ we have 
\begin{align}
\label{eq:12}
\|V^r(A_{N, \mathbb Z}^P\Pi_{\leq l_{(N)}, \leq -d L_{(N)}}f:N\in\mathbb D_{\tau}) \|_{\ell^p(\mathbb Z)} 
\lesssim \|f\|_{\ell^p(\mathbb Z)}.
\end{align}

Now we need to fix a convenient notation. For any bounded function $\mathfrak m : \mathbb T\to\mathbb C$,
any finite set of frequencies $\Sigma\subseteq \mathbb T$, any bounded function $S:\Sigma\to \mathbb C$, we define the Fourier multiplier operator  by 
\begin{align*}
T_{\mathbb Z}^{\Sigma}[S; \mathfrak m]f(x) := \mathcal F_{\mathbb Z}^{-1}\Big[\sum_{\theta\in\Sigma}S(\theta)\tau_{\theta}\mathfrak m\mathcal F_{\mathbb Z}f\Big](x), \qquad  x\in\mathbb Z,
\end{align*}
for any test function $f \colon \mathbb Z\to\mathbb C$,
where $\tau_{\theta}\mathfrak m(\xi) := \mathfrak m(\xi-\theta)$ for $\xi\in\mathbb T$.

Writing
\begin{align*}
A_{N, \mathbb Z}^P\Pi_{\leq l_{(N)}, \leq -d L_{(N)}}f=\sum_{0\le l\le l_{(N)}}
A_{N, \mathbb Z}^P(\Pi_{l, \leq -dL_{(N)}}f),
\end{align*}
where $\Pi_{l, \leq -d L_{(N)}} := T_{\mathbb Z}^{\Sigma_{l}}[1; \eta_{\le -d L_{(N)}}]$, it suffices to show for all $l\in\mathbb N$ that
\begin{align}
\label{eq:27}
\|V^r(A_{N, \mathbb Z}^P\Pi_{l, \leq -d L_{(N)}}f:N\in\mathbb D_{\tau, l}) \|_{\ell^p(\mathbb Z)} 
\lesssim 2^{-c_pl}\|f\|_{\ell^p(\mathbb Z)},
\end{align}
where
$\mathbb D_{\tau, l}:=\{N\in\mathbb D_{\tau}:N\ge C_0 \text{ and
} N\ge 2^{l/\alpha}\}$. Summing \eqref{eq:27} over $l\in\mathbb N$,
we obtain \eqref{eq:12}.

We now take the advantage of the fact that each function in
\eqref{eq:27} is restricted to major arcs.
For $N\ge 1$ we define a continuous counterpart of $m_N$ by
\begin{align}
\label{eq:6}
\mathfrak{m}_N(\xi) := \int_{0}^1e(\xi\cdot P(Nt))dt, \qquad \xi\in\mathbb R.
\end{align}
For every $a\in\mathbb Z^k$ and $q\in\mathbb Z_+$ such that $(a, q)=1$ we  define the complete exponential sum by
\begin{align}
\label{eq:10}
G(a/q) := m_q(a/q)=\mathbb E_{n \in [q]} e\Big(\frac{a}{q} P(n)\Big).
\end{align}

\begin{lemma}
\label{lemma:1}
Let $P\in\mathbb Z[{\rm n}]$ with $\deg P=d$ be given. Then there is $C_{P} \in \mathbb R_+$ such
that for every $N \geq 1$, $M \in \mathbb R_+$, and
$l\in\mathbb N$ the following holds.  For every $\xi\in\mathbb T$
and $\theta\in \Sigma_{l}$ such that
$|\xi-\theta|\le M^{-1}$, one has
\begin{align}
\label{eq:32}
|m_N(\xi)-G(\theta)\mathfrak m_N(\xi-\theta)|\le C_{P}2^{l}\big(M^{-1}N^{d-1}+N^{-1}\big),
\end{align}
with $m_N, \mathfrak m_N, G$ defined in \eqref{eq:28}, \eqref{eq:6} and \eqref{eq:10} respectively.
\end{lemma}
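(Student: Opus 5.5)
This is the standard major-arc approximation. Write $\theta=a/q$ with $(a,q)=1$ and $q\le 2^l$ (since $\theta\in\Sigma_l\subseteq\mathcal R_{\le 2^l}$), and set $\beta:=\xi-\theta$, so $|\beta|\le M^{-1}$. If $N\le q$, the left-hand side of \eqref{eq:32} is at most $2\le 2\cdot 2^l N^{-1}$, so we may assume $N>q$. Split $[N]$ into residue classes modulo $q$: write $n=qm+s$ with $s\in[q]$ and $m$ ranging over integers with $qm+s\in[N]$. Since $P$ has integer coefficients, $P(qm+s)\equiv P(s)\pmod q$. Hence
\[
e(\xi P(n))=e\big(\tfrac aq P(s)\big)\,e(\beta P(qm+s)),
\]
and
\[
m_N(\xi)=\frac1N\sum_{s\in[q]}e\big(\tfrac aq P(s)\big)\sum_{m:\,qm+s\in[N]}e(\beta P(qm+s)).
\]

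The key step is to compare each inner sum with an integral. For fixed $s$, the inner sum runs over about $N/q$ values of $m$, and it should be replaced by
\[
\frac1q\int_0^N e(\beta P(t))\,dt .
\]
The comparison has two sources of error.
\begin{enumerate}[label*={(\roman*)}]
\item Boundary effects: the range of $m$ is an interval whose endpoints differ from $0$ and $N/q$ by $O(1)$, which costs $O(1)$ per residue class.
\item Sum versus integral: on each interval $[qm+s,\,q(m+1)+s]$ of length $q$, the function $t\mapsto e(\beta P(t))$ varies by at most $q\sup_{t\le N}|\beta P'(t)|\lesssim_P q M^{-1}N^{d-1}$, by the mean value theorem. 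Summed over the $N/q$ values of $m$ and divided by $q$ (the integral is taken in the variable $t$), this gives $O_P(NM^{-1}N^{d-1})$ per residue class, up to the same normalisation.
\end{enumerate}
To keep the bookkeeping clean, I would write
\[
q\sum_m g(qm+s)-\int_0^N g=\sum_m\int_{qm+s}^{q(m+1)+s}\big(g(qm+s)-g(t)\big)\,dt+O(q),
\]
with $g(t)=e(\beta P(t))$. This shows the inner sum equals $\frac1q\int_0^N g+O(1)+O_P(NM^{-1}N^{d-1})$, where the implied constant depends only on $P$ and $|P'(t)|\lesssim_P N^{d-1}$ on $[0,N]$.

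Summing over $s\in[q]$ with weights $e(aP(s)/q)$ and dividing by $N$ gives
\[
m_N(\xi)=G(\theta)\cdot\frac1N\int_0^N e(\beta P(t))\,dt+O_P\big(qN^{-1}+qM^{-1}N^{d-1}\big),
\]
because the weighted main terms sum to $\frac1N\cdot q\,G(a/q)\cdot\frac1q\int_0^N g$. The substitution $t=Nu$ turns the normalised integral into exactly $\mathfrak m_N(\xi-\theta)$ from \eqref{eq:6}. Finally, $q\le 2^l$, which gives \eqref{eq:32}.

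I expect no real obstacle here. The only delicate point is tracking the normalisations: the factor $1/q$ produced by each residue class must combine with $\frac1q\sum_s$ to form $G$, and the derivative bound must be applied at scale $N$ so that $M^{-1}N^{d-1}$ appears with $q$, not $q^2$, in front.
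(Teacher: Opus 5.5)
Your proof is correct and is the standard argument the paper compresses into ``a simple application of the mean-value theorem'': you split $[N]$ into residue classes modulo $q$, use $P(qm+s)\equiv P(s) \pmod q$, and compare each class sum with $\frac1q\int_0^N e(\beta P(t))\,dt$, which gives exactly the $O(qN^{-1})$ boundary error and the $O_P(qM^{-1}N^{d-1})$ mean-value error. Two points are cosmetic and harmless: $\mathbb E_{n\in[N]}$ normalises by $\lfloor N\rfloor$ rather than $N$, which costs at most $N^{-1}$, and the derivative bound is needed on $[0,N+q]\subseteq[0,2N]$ rather than on $[0,N]$.
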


\begin{proof}
Inequality \eqref{eq:32} follows by a simple application of the mean-value theorem.
\end{proof}

Using Lemma \ref{lemma:1} with $M=2^{dL_{(N)}}$ we conclude that
\[
A_{N, \mathbb Z}^P(\Pi_{l, \leq -dL_{(N)}}f)=T_{\mathbb Z}^{\Sigma_{l}}[G; \mathfrak m_N\eta_{\le -d L_{(N)}}]f+E_Nf.
\]
Moreover by a simple interpolation there exists $c_p\in(0, 1)$ such that
\begin{align*}
\|E_Nf\|_{\ell^p(\mathbb Z)} 
\lesssim N^{-c_p}\|f\|_{\ell^p(\mathbb Z)} \quad \text{ for all } \quad f\in \ell^p(\mathbb Z).
\end{align*}
This inequality for $N\in\mathbb D_{\tau, l}$ reduces inequality \eqref{eq:27} to proving that for all $l\in\mathbb N$, we have 
\begin{align}
\label{eq:40}
\|V^r(T_{\mathbb Z}^{\Sigma_{l}}[G; \mathfrak m_N\eta_{\le -d L_{(N)}}]f:N\in\mathbb D_{\tau, l}) \|_{\ell^p(\mathbb Z)}
\lesssim 2^{-c_pl}\|f\|_{\ell^p(\mathbb Z)}.
\end{align}

For this purpose, define $u:=100(l+1)$ and note that for any $N\in\mathbb D_{\tau, l}$ we have 
$N\ge \max\{2^{l/\alpha}, C_0\},$
which  implies $N\ge 2^{10^5  dp_0l}C_0^{1/2}\ge 2^{100p_0du},$
provided that $C_0\ge 2^{10^6  dp_0}$. Thus we may conclude that
\begin{align}
\label{eq:30}
T_{\mathbb Z}^{\Sigma_{l}}[G; \mathfrak m_N\eta_{\le -d L_{(N)}}]f=T_{\mathbb Z}^{\Sigma_{l}}[1; \mathfrak m_N\eta_{\le -d L_{(N)}}]T_{\mathbb Z}^{\Sigma_{l}}[G; \eta_{\le -d up_0}]f.
\end{align}
By the semi-norm Ionescu--Wainger theorem (see \cite[Theorem 3.32]{KMPW}), we have for all $l\in\mathbb N$, that
\begin{align}
\label{eq:3}
\|V^r(T_{\mathbb Z}^{\Sigma_{l}}[1; \mathfrak m_N\eta_{\le -d L_{(N)}}]f:N\in\mathbb D_{\tau, l}) \|_{\ell^p(\mathbb Z)} 
\lesssim_{\rho} 2^{\rho l}\|f\|_{\ell^p(\mathbb Z)},
\end{align}
for any $\rho\in(0, 1)$.
 On the other hand, using Lemma \ref{lemma:1} again,  with $M=2^{dup_0}$ we conclude that 
\[
T_{\mathbb Z}^{\Sigma_{l}}[G; \eta_{\le -dup_0}]f=A_{2^u, \mathbb Z}^P(\Pi_{l, \leq -dup_0}f)+E_{2^u}f,
\]
where the error term $E_{2^u}$ satisfies for some $c_p\in(0, 1)$ the following bound
\begin{align*}
\|E_{2^u}f\|_{\ell^p(\mathbb Z)} 
\lesssim 2^{-c_pl}\|f\|_{\ell^p(\mathbb Z)} \quad \text{ for all } \quad f\in \ell^p(\mathbb Z).
\end{align*}
By the Weyl inequality on $\ell^p(\mathbb Z)$ spaces, we conclude that
\begin{align*}
\|A_{2^u, \mathbb Z}^P(\Pi_{l, \leq -dup_0}f)\|_{\ell^p(\mathbb Z)} 
\lesssim 2^{-c_pl}\|f\|_{\ell^p(\mathbb Z)} \quad \text{ for all } \quad f\in \ell^p(\mathbb Z),
\end{align*}
since $\Pi_{l, \leq -dup_0}f$ has vanishing Fourier transform on $\mathcal M_{\le l-1, \leq -du+\varepsilon(l-1)}$ for a sufficiently small $\varepsilon>0$. Combining the last two bounds with \eqref{eq:30} and inequality \eqref{eq:3}, since $\rho\in(0, 1)$ can be made arbitrarily small, we obtain \eqref{eq:40} as desired. This completes the proof of inequality \eqref{eq:17}.

\section{Furstenberg--Bergelson--Leibman conjecture}

One of the major and best known open problems in pointwise ergodic
theory is the following far reaching conjecture of
Furstenberg--Bergelson--Leibman:

\begin{ber}
Let $d, k, m\in\mathbb N$ be given and suppose that a family of invertible
measure-preserving transformations $T_1,\ldots, T_d:X\to X$ of a
probability measure space $(X, \mathcal B(X), \mu)$ generates a
nilpotent group. Assume that
$P_{1, 1},\ldots,P_{i, j},\ldots, P_{d, m}\in \mathbb Z[\mathrm n_1,\ldots, \mathrm n_k]$
are $k$-variate polynomials with integer coefficients. Then for any functions
$f_1, \ldots, f_m\in L^{\infty}(X)$, the non-conventional multilinear
polynomial averages
\begin{align}
\label{eq:29}
A_{N_1,\ldots, N_k; X,  T_1,\ldots, T_d}^{P_{1, 1}, \ldots, P_{d, m}}(f_1,\ldots, f_m)(x)=\mathbb E_{n\in[N_1]\times\ldots\times [N_k]}\prod_{j=1}^mf_j(T_1^{P_{1, j}(n)}\cdots T_d^{P_{d, j}(n)} x)
\end{align}
converge for $\mu$-almost every $x\in X$ as
$\min\{N_1,\ldots, N_k\}\to\infty$.  If $N_1=\ldots=N_k=N$ we shall
abbreviate
$A_{N_1,\ldots, N_k;X, T_1,\ldots, T_d}^{P_{1, 1}, \ldots, P_{d, m}}$
to $A_{N; X, T_1,\ldots, T_d}^{P_{1, 1}, \ldots, P_{d, m}}$.

\end{ber}

This conjecture represents a significant challenge in pointwise
ergodic theory and modern Fourier analysis. It was initially proposed
in person by Furstenberg (see Austin's article \cite[p. 6662]{A1})
before being published by Bergelson and Leibman \cite[Section 5.5,
p. 468]{BL} (see also \cite{Ber1, Ber2, Fra}). Bergelson and Leibman
\cite{BL} also demonstrated that convergence in \eqref{eq:29} may fail
if the transformations \( T_1, \ldots, T_d \) generate a solvable
(non-nilpotent) group, suggesting that the nilpotent setting is likely
the appropriate context for this conjecture.

To understand how the Furstenberg--Bergelson--Leibman conjecture
arose, one must go back to 1936, when Erd{\"o}s and Tur{\'a}n
formulated an audacious conjecture asserting that every
\( A \subseteq \mathbb{Z} \) with positive upper density must contain
an arithmetic progression of any length \( m \in \mathbb{Z}_+ \),
which is a configuration of the form
\[
x, x+n, x+2n, \ldots, x+(m-1)n \in A,
\]
for some $x\in A$ and $ n\in\mathbb Z\setminus\{0\}$.

In 1953, the conjecture of Erd{\"o}s and Tur{\'a}n was proved by Roth
\cite{Rot} for \( m = 3 \) using Fourier analysis. In 1974,
Szemer{\'e}di \cite{Sem1} verified this conjecture for arbitrary
\( m \in \mathbb{Z}_+ \), employing intricate combinatorial arguments
based on a famous regularity lemma, which is a powerful tool in graph theory.

At first glance, ergodic theory and extremal combinatorics may seem
entirely unrelated. However, in the late 1970s, Furstenberg
\cite{Fur0} had the remarkable insight to recognize that
Szemer{\'e}di's theorem can be interpreted as a multiple recurrence
result in ergodic theory, akin to the classical Poincar{\'e}
recurrence theorem. He then formulated what is now known in the
literature as the Furstenberg correspondence principle, which serves
as a bridge between combinatorics and ergodic theory, providing a
conceptually new proof of Szemer{\'e}di's theorem.

Furstenberg's proof \cite{Fur0} of Szemer{\'e}di's theorem \cite{Sem1}
was the starting point of a new field called ergodic Ramsey theory,
which has led to many natural generalizations of Szemer{\'e}di's theorem,
including a polynomial Szemerédi theorem by Bergelson and Leibman
\cite{BL1}. The latter result (in its simplest form) asserts that for
any \( m \in \mathbb{Z}_+ \) and for every
\( A \subseteq \mathbb{Z} \) with positive upper density, and for
arbitrary polynomials \( P_1, \ldots, P_m \in \mathbb{Z}[n] \) with
vanishing constant terms, one has
\[
x, x + P_1(n), x + P_2(n), \ldots, x + P_m(n) \in A,
\]
for some \( x \in A \) and \( n \in \mathbb{Z} \setminus \{0\} \). In
other words, \( A \) contains polynomial patterns (or polynomial progressions) of
arbitrary length.

Both Furstenberg's theorem \cite{Fur0} and the
theorem by Bergelson and Leibman \cite{BL1} served as catalysts for
understanding the limiting behavior of multiple averages
\eqref{eq:29}, which are natural tools for detecting recurrent points
and, consequently, arithmetic and polynomial progressions in subsets
of integers with non-vanishing upper density. This provides the
motivation for the Furstenberg--Bergelson--Leibman conjecture.

\subsection{Norm convergence for multilinear averages}
The Furstenberg--Bergelson--Leibman conjecture was initially
focused on the convergence of \eqref{eq:29} in both the
\( L^2(X) \) norm and pointwise almost everywhere. The \( L^2(X) \)
norm convergence for
$A_{N; X, T_1,\ldots, T_d}^{P_{1, 1}, \ldots, P_{d, m}}$ was settled
by Walsh in a breakthrough work \cite{W}. We also refer to \cite{A2,
PZK} for alternative proofs. Although Walsh's result establishes
\( L^2(X) \) norm convergence for
$A_{N; X, T_1,\ldots, T_d}^{P_{1, 1}, \ldots, P_{d, m}}$ (even for
noncommutative transformations \( T_1, \ldots, T_d \) spanning a
nilpotent group), the question of identifying the limit for arbitrary
transformations and polynomials remains largely
unanswered. Identifying the limit for general polynomial ergodic
averages is a well-known open problem in ergodic theory, even in the
linear case.

Before Walsh's paper \cite{W}, there was an extensive body of research
aimed at establishing \( L^2(X) \) norm convergence for \eqref{eq:29}
in the single transformation case, i.e., with \( d=k=1 \),
\(m \in \mathbb{Z}_+ \), \( P_{1,j} = P_j \in \mathbb{Z}[n] \) for
\( j \in [m] \), where \( P_1, \ldots, P_m \) are arbitrary
polynomials. This includes fundamental work for linear polynomials by
Host and Kra \cite{HK} and, independently, by Ziegler \cite{Z1}.  The
case of general polynomials was addressed in the work by Leibman
\cite{Leibman}. In the single transformation case, in
contrast to Walsh's paper \cite{W}, the limiting function in
\eqref{eq:29} can be identified thanks to the theory of Host–Kra
factors \cite{HK} and equidistribution on nilmanifolds \cite{Leibman}.
We refer to \cite{Fra}, where additional information on this topic and
precise references can be found.

The case of arbitrary commuting measure-preserving transformations is
much more delicate, i.e. \eqref{eq:29} with \( k=1 \),
\( d=m \in \mathbb{Z}_+ \), \( P_{j,j} = P_j \in \mathbb{Z}[n] \) for
\( j \in [m] \), and \( P_{i,j} = 0 \) for \( i \neq j \), where
\( P_1, \ldots, P_m \) are arbitrary polynomials and
\( T_1, \dots, T_d \) are commuting transformations.  The case of
linear polynomials was subsequently studied by Tao~\cite{Tao}, Austin
\cite{A2}, and Host \cite{H}. In \cite{CFH}, Chu, Frantzikinakis, and
Host established $L^2(X)$ norm convergence for these averages when the
polynomials have distinct degrees.  However, the $L^2(X)$ limit for
$A_{N; X, T_1,\ldots, T_d}^{P_{1, 1}, \ldots, P_{d, m}}$ in the case
of commuting transformations and linearly independent polynomials has
recently been identified by Frantzikinakis and Kuca \cite{FK1}.

The state of knowledge is dramatically worse for pointwise almost
everywhere convergence of multilinear polynomial ergodic averages. As
we noted at the beginning of this survey, the phenomenon of pointwise
convergence, while the most natural mode of convergence, is very
subtle and can differ significantly from norm convergence, making the
study of pointwise convergence problems quite challenging due to the
requirement of a variety of quantitative tools from different areas.

\subsection{Furstenberg--Bergelson--Leibman conjecture: commutative linear case}
In \cite{M10}, the author presented a slightly different proof of the
Ionescu--Wainger multiplier theorem \cite{IW}. More importantly, in
\cite{M10}, it was shown for the first time that the Ionescu--Wainger
multiplier theorem can be interpreted as a discrete variant (a
multifrequency variant) of the Littlewood--Paley theory, which
captures all the Diophantine features arising from the
Hardy--Littlewood circle method. This answered in the affirmative a
question posed by Stein in the early 1990s, which had been an open
problem in the field of discrete analogues in harmonic analysis since
Bourgain's papers \cite{B1, B2, B3}.

The discrete Littlewood--Paley theory from \cite{M10} resulted in
$r$-variational ergodic theorems \cite{MST1, MST2}, jump ergodic
theorems \cite{MSZ1, MSZ2, MSZ3}, and, recently, oscillation ergodic
theorems \cite{MSS}, particularly establishing the
Furstenberg--Bergelson--Leibman conjecture for
\begin{align*}
A_{N; X,  T_1,\ldots, T_d}^{P_{1, 1}, \ldots, P_{d, 1}}f(x)
=\mathbb E_{n\in[N]^k}f(T_1^{P_{1, 1}(n)}\cdots T_d^{P_{d, 1}(n)} x),
\end{align*}
corresponding to \eqref{eq:29} with $m=1$, $d, k\in\mathbb N$,
$N_1=\ldots=N_k=N$ for any polynomials
$P_{1, 1},\ldots,P_{d, 1}\in \mathbb Z[\mathrm n]$, and for any invertible
measure-preserving commuting transformations $T_1,\ldots, T_d:X\to X$
on a $\sigma$-finite measure space $(X, \mathcal B(X), \mu)$.  The
results from \cite{MST1, MST2, MSS, MSZ1, MSZ2, MSZ3, MT} also give
the sharpest possible quantitative information about the pointwise
convergence phenomena, and secured the Rademacher--Menshov inequality
\cite{BMSW1, BMSW1',  MT} as one of the primary tools in the
subject.

\subsection{Furstenberg--Bergelson--Leibman conjecture: multilinear case}
Thirty years after  foundational bilinear Bourgain's paper
\cite{B0} on the double recurrence theorem (i.e., for $A_{N; X, T}^{a\mathrm n, b\mathrm n}$ that
corresponds to \eqref{eq:29} with $d=k=1$, $m=2$, $P_{1,1}(n)=an$, and
$P_{1, 2}(n)=bn$ for $a, b\in\mathbb Z$), the author with Krause and Tao
\cite{KMT} proved the  Furstenberg--Bergelson--Leibman conjecture
for the polynomial Furstenberg--Weiss averages
\[
A_{N; X, T}^{\mathrm n, P(\mathrm n)}(f_1, f_2)(x)=\mathbb E_{n\in[N]} f_1(T^{n}x) f_2(T^{P(n)}x),
\]
corresponding to \eqref{eq:29} with $d=k=1$, $m=2$, $P_{1,1}(n)=n$,
and $P_{1, 2}(n)=P(n)$, where $P\in\mathbb Z[\mathrm n]$ with
${\rm deg }P\ge2$, see \cite{FurWei}. This work has been
greatly generalized recently by \cite{KMPW}, where pointwise almost
everywhere convergence was established for genuinely multilinear
ergodic averages of arbitrary polynomials with distinct degrees,
breaking the bilinearity barrier in the Furstenberg--Bergelson--Leibman
conjecture for the first time.

Aside from resolving the Furstenberg--Bergelson--Leibman conjecture
for multilinear ergodic averages for arbitrary polynomials with
distinct degrees, the pointwise convergence phenomenon in \cite{KMPW, KMT}
has been quantified by establishing sharp $r$-variational estimates.
The proofs from \cite{KMT} and \cite{KMPW} integrate novel ideas
from number theory, additive combinatorics, and Fourier analysis,
providing fresh insights into numerous unresolved problems in these
fields.  The difficulties encountered in the multilinear setting
necessitated significant new ideas.  Most notably, Plancherel's
theorem and Weyl sum estimates (see inequality \eqref{eq:288} and
\eqref{eq:55}) are insufficient on their own to control the
contribution from the minor arcs, thus defeating a classical
implementation of the circle method introduced by Bourgain \cite{B3}.
On the minor arcs, we used the inverse theory of Peluse and
Prendiville \cite{PP1}, \cite{P2} combined with the Hahn--Banach
theorem, the Ionescu--Wainger multiplier theorem \cite{IW}, and
$L^p$-improving estimates to construct a robust \textit{multilinear circle
method}. This resulted in a multilinear counterpart of Weyl's
inequality from Theorem \ref{Weyllp}. On the major arcs, we developed
certain paraproduct estimates based on discrete Littlewood--Paley
theory \cite{M10}, combined with Sobolev smoothing estimates and
$L^p$-improving estimates to control high and low frequencies.

\subsection{Furstenberg--Bergelson--Leibman conjecture: non-commutative linear case}
Recently, the author with Ionescu, Magyar, and Szarek \cite{IMMS}
established the first general nilpotent case of the
Furstenberg--Bergelson--Leibman conjecture for linear averages
\begin{align*}
A_{N; X,  T_1,\ldots, T_d}^{P_{1, 1}, \ldots, P_{d, 1}}f(x)
=\mathbb E_{n\in[N]}
f(T_1^{P_{1, 1}(n)}\cdots T_d^{P_{d, 1}(n)} x),
\end{align*}
corresponding to \eqref{eq:29} with $k=m=1$, $d\in\mathbb N$ and any
polynomials $P_{1, 1},\ldots,P_{d, 1}\in \mathbb Z[\mathrm n]$, and for any
invertible measure preserving transformations $T_1,\ldots, T_d:X\to X$
on a $\sigma$-finite measure space $(X, \mathcal B(X), \mu)$ that
generate a nilpotent group of step two. One can think that the
following commutator relations $[[T_i, T_j], T_l]={\rm Id}$ are
satisfied for all $i, j, l\in[d]$.  

Prior to \cite{IMMS}, only partial results \cite{IMSW, IMW, MSW1} were
known in the discrete nilpotent setting. Thanks to the Calder{\'o}n
transference principle \cite{Cald}, noncommutative ergodic operators
from \cite{IMMS}, can be interpreted as convolution operators on
certain universal nilpotent groups $(\mathbb G_0, \cdot)$ of step
two. These groups $(\mathbb G_0, \cdot)$ are induced by the underlying
measure-preserving transformations $T_1,\ldots, T_d$ and the
polynomial mappings $P_{1, 1},\ldots, P_{d, 1}$.  However, \cite{IMMS}
represents another example where Fourier methods are not directly
available. The main issue is that there is no suitable Fourier
transform on nilpotent groups --- at the analytical precision level of the
classical circle method --- that is compatible with the structure of the
underlying convolution operators. These challenges lead to significant
difficulties in the proof and necessitate substantial new ideas that were developed in \cite{IMMS}:
\begin{itemize}
\item[(i)] Classical Fourier techniques have been replaced with
almost-orthogonality methods that exploit high-order \( TT^* \)
arguments for operators defined on the discrete group
\( \mathbb{G}_0 \), which arise in the proof of the main theorem in
\cite{IMMS}. Studying high powers of \( TT^* \) (i.e., \( (TT^*)^r \)
for a large \( r \in \mathbb{Z}_+ \)) allows for a simple heuristic
underlying the proof of Waring-type problems to be adapted 
 in the context of our proof. This heuristic states
that the more variables that occur in the Waring-type equation, the
easier it is to find a solution. By manipulating the parameter \( r \)
(usually taking \( r \) to be very large), we can always determine how
many variables we have at our disposal, making the operators in our
questions ``smoother and smoother''.

\item[(ii)]
Our main new construction in \cite{IMMS} is what we call a \textit{nilpotent circle method}, an iterative procedure that starts from the center of the group and moves down along its central series. This method allows us to use some ideas from the classical circle method recursively at every stage. In the case of nilpotent groups of step two, the procedure consists of two basic iterations and one additional step corresponding to ``major arcs''.  The analysis of minor arcs requires two types of Weyl's inequalities: the classical one (see inequality \eqref{eq:288}) and the nilpotent version inspired by Davenport \cite{Dav} and Birch \cite{Bi}, which was proved in \cite{IMW}. The analysis of major arcs brings into play tools that combine continuous harmonic analysis on groups with arithmetic harmonic analysis over finite integer rings.
\end{itemize}

\subsection{Furstenberg--Bergelson--Leibman conjecture: commutative multiparameter linear case}

A multiparameter ergodic theorem, independently proved by Dunford
\cite{D} and Zygmund \cite{Z}, establishes the
Furstenberg--Bergelson--Leibman conjecture for $m=1$, $d=k\in\mathbb N$ with
linear polynomials $P_{1, 1}(n)=\ldots=P_{d, 1}(n)=n$. In 2016,
Bourgain, motivated by the results \cite{D, Z} and a multiparameter
equidistribution theorem of Arkhipov, Chubarikov, and Karatsuba
\cite{ACK}, proposed studying the pointwise convergence problem for
the multiparameter averages
\[
A_{N_1,\ldots, N_k; X,  T}^{P}f(x)=\mathbb E_{n\in[N_1]\times\ldots\times [N_k]}f(T^{P(n)}x),
\]
which was established by the author with Bourgain, Stein, and Wright
in \cite{BMSWer} giving an affirmative answer to a multiparameter
variant of a question of Bellow and Furstenberg from the late
1980's. This also contributes to the Furstenberg--Bergelson--Leibman
conjecture in the case $d=m=1$, $k\in\mathbb N$, and arbitrary
$P_{1,1}\in \mathbb Z[\mathrm n_1,\ldots, \mathrm n_k]$. A continuous
counterpart of this problem, in a more general setting, was recently
completely resolved by the author in collaboration with Kosz,
Langowski, and Plewa in \cite{KLMP}.  In \cite{BMSWer}, a
\textit{multiparameter circle method} was developed to address the
challenges arising from multiparameter phenomena in ergodic theory and
discrete harmonic analysis. 

This topic has historically been both a
burden and a challenge in classical harmonic analysis, beginning with
Fefferman's counterexample for double Fourier series, continuing with
the failure of Lebesgue's differentiation theorem in \(L^1\) for
averages over rectangles with sides parallel to the coordinate axes,
and culminating with the Kakeya maximal conjecture, which remains a
major open problem in modern harmonic analysis \cite{bigs}.

\end{document}